\newif\ifHAL
\HALtrue

\ifHAL
\documentclass[10pt,a4paper]{article}
\else
\documentclass[10pt]{m2an}

\fi

\usepackage[colorlinks,citecolor=cyan,linkcolor=magenta]{hyperref}
\usepackage[latin1]{inputenc}
\usepackage{amsmath}
\usepackage{amsmath,bm}
\usepackage{amsfonts}
\usepackage{amssymb}
\usepackage{subcaption}
\usepackage{graphicx}
\usepackage{xspace}
\usepackage{tikz}
\usepackage{fullpage}

\usepackage[colorinlistoftodos]{todonotes}

\ifHAL
\usepackage{theorem}
\newtheorem{corollary}{Corollary}[section]
\newtheorem{lemma}[corollary]{Lemma}
\newtheorem{theorem}[corollary]{Theorem}
\newtheorem{proposition}[corollary]{Proposition}
\newtheorem{definition}[corollary]{Definition}

\newtheorem{remark}[corollary]{Remark}

\newtheorem{assumption}[corollary]{Assumption}

\newcommand{\qed}{ \vspace{-.5cm}\hfill $\Box$ }
\newenvironment{proof}[1][Proof.]{\begin{trivlist}
		\item[\hskip \labelsep {\bfseries #1}]}{\end{trivlist}\qed}
\else
\theoremstyle{plain}
\newtheorem{theorem}{Theorem}[section]
\newtheorem{remark}[theorem]{Remark}
\newtheorem{lemma}[theorem]{Lemma}
\newtheorem{corollary}[theorem]{Corollary}

\fi




\def\bi{\begin{itemize}}
\def\ei{\end{itemize}}
\def\berom{\begin{enumerate}[{\rm(i)}]}
\def\eerom{\end{enumerate}}

\newcounter{itemrem}

\def\eRem{\qed\end{Rem}}
\makeatletter
\def\bRem{\@ifnextchar[{\@remwithtitle}{\@remwithouttitle}}
\def\@remwithtitle[#1]{\begin{Rem}[#1]\setcounter{itemrem}{0}}%
\def\@remwithouttitle{\begin{Rem}\setcounter{itemrem}{0}}%
\makeatother

\newcounter{itemexp}

\def\eExp{\qed\end{Exp}}
\makeatletter
\def\bExp{\@ifnextchar[{\@expwithtitle}{\@expwithouttitle}}
\def\@expwithtitle[#1]{\begin{Exp}[#1]\setcounter{itemexp}{0}}%
\def\@expwithouttitle{\begin{Exp}\setcounter{itemexp}{0}}%
\makeatother

\def\bproof{\begin{proof}}
\def\eproof{\qed\end{proof}}



\def\bExo{\begin{Exo}}
\def\eExo{\end{Exo}}






\def\wS{{\widehat S}}













\newcommand{\bfjlg}{\bm}         
\newcommand{\boldsymboljlg}{\boldsymbol} 

\newcommand{\ba}{{\bfjlg a}}
\newcommand{\bb}{{\bfjlg b}}

\newcommand{\bd}{{\bfjlg d}}

\newcommand{\bef}{{\bfjlg f}}

\newcommand{\bl}{{\bfjlg l}}

\newcommand{\bn}{{\bfjlg n}}

\newcommand{\bv}{{\bfjlg v}}
\newcommand{\bw}{{\bfjlg w}}
\newcommand{\bx}{{\bfjlg x}}

\newcommand{\bA}{{\bfjlg A}}

\newcommand{\bC}{{\bfjlg C}}

\newcommand{\bH}{{\bfjlg H}}

\newcommand{\bL}{{\bfjlg L}}

\newcommand{\bP}{{\bfjlg P}}

\newcommand{\bT}{{\bfjlg T}}

\newcommand{\bV}{{\bfjlg V}}
\newcommand{\bW}{{\bfjlg W}}

\newcommand{\bY}{{\bfjlg Y}}


\newcommand{\bxi}{{\boldsymboljlg \xi}}

\newcommand{\bphi}{{\boldsymboljlg \phi}}

\newcommand{\bzero}{\bfjlg{0}}



\newcommand{\calF}{{\mathcal F}}

\newcommand{\calH}{{\mathcal H}}

\newcommand{\calJ}{{\mathcal J}}

\newcommand{\calT}{{\mathcal T}}



\newcommand{\polN}{{\mathbb N}}



\newcommand{\bpolN}{\pmb{\mathbb N}}

\newcommand{\bpolP}{\pmb{\mathbb P}}




\newcommand{\upc}{^{\mathrm{c}}}
\newcommand{\upd}{^{\mathrm{d}}}

\newcommand{\upcb}{^{\mathrm{c,b}}}

\newcommand{\upcav}{^{\mathrm{c,av}}}
\newcommand{\updav}{^{\mathrm{d,av}}}



%

%

\newcommand{\interior}{\mathop{\mbox{\rm int}}}


\def\Real{{\mathbb R}}

%



\def\inter{\mathcal{I}}


\def\lb{[\![}
\def\rb{]\!]}

%
\newcommand{\upint}{^\circ}
\newcommand{\upbnd}{^\partial}

\def\hinH{{h\in\calH}}
\def\subhinH{_\hinH}
\def\famTh{(\calT_h)\subhinH}

\def\calFh{\calF_h}
\def\calFK{\calF_K}
\def\calFhi{\calFh\upint}
\def\calFhb{\calFh\upbnd}




%







\newenvironment{Ventry}[1]%
{\begin{list}{}{%
\settowidth{\labelwidth}{{\rm (#1)}}%
\setlength{\leftmargin}{\labelwidth+\labelsep}}}
{\end{list}}



\newcommand{\Dom}{D}






\newcommand{\intset}[2]{\{#1\hskip.05em\relax{:}\hskip.1em\relax#2\}}

 

\newcommand{\term}{\mathfrak{T}}



\newcommand{\loS}{_{\mathrm{\scriptscriptstyle S}}}

\newcommand{\eqq}{\mathrel{\mathop:}=}
\newcommand{\qqe}{=\mathrel{\mathop:}}

\def\DIV{\nabla{\cdot}}      
\def\ROT{\nabla{\times}} 
\def\GRAD{\nabla}

\def\SCAL{{\cdot}}

\def\dif{\,\mathrm{d}}

\def\front{{\partial\Dom}}


\def\Ldeuxd{{\bL^2(\Dom)}}

\def\Hdiv{{{\bH(\text{\rm div};\Dom)}}}

\def\Hrot{{{\bH(\text{\rm curl};\Dom)}}}
\def\Hrotz{{{\bH_0(\text{\rm curl};\Dom)}}}
%




\def\jump#1{\lb{#1}\rb}


%

\newcommand{\eg}{e.g.,\@\xspace}

\newcommand{\bVentry}[1]{\begin{Ventry}{#1}}
\newcommand{\eVentry}{\end{Ventry}}

\catcode`\@=11
\newdimen\linespacing
\linespacing=\baselineskip
\newcommand\subsubsectionmodif{\@startsection{subsubsection}{3}%
  {\z@}{.5\linespacing\@plus.7\linespacing}{-.5em}{\bfseries}*}


\newlength\pageone
\newlength\pagetwo
\newlength\interspace

\newlength\retraitspace
\newlength\calculspace


\newcommand{\tq}{{\;|\;}} 
\newcommand{\st}{\tq}

\DeclareMathOperator*{\essinf}{ess\,inf}

\makeatletter

\makeatother

\makeatletter
\newcommand{\manuallabel}[2]{{\textup{(#2)}\def\@currentlabel{#2}\label{#1}}}
\makeatother

\newcounter{subeq}

\newcommand{\ale}[1]{{#1}}

\begin{document}
	
	\title{Local decay rates of best-approximation errors using vector-valued finite elements for fields with low regularity and integrable curl or divergence}
	
	\ifHAL
	\author{
		Zhaonan Dong\thanks{
			Inria Paris, 75589 Paris, France,
			and CERMICS, Ecole des Ponts, 77455 Marne-la-Vall\'{e}e cedex 2, France.
			{\tt{zhaonan.dong@inria.fr}}.}
		\and Alexandre Ern\thanks{
			CERMICS, Ecole des Ponts, 77455 Marne-la-Vall\'{e}e cedex 2, France,
			and  Inria Paris, 75589 Paris, France.
			{\tt{alexandre.ern@enpc.fr}}.}
		\and Jean-Luc Guermond\thanks{
			Department of Mathematics, Texas A\&M University, 3368 TAMU, College Station, TX 77843, USA.
			{\tt{guermond@math.tamu.edu}}.}
	}
	\else
	\author{Zhaonan Dong}
	\address{Inria Paris, 75589 Paris, France.
		\email{zhaonan.dong@inria.fr}}
	\secondaddress{CERMICS, Ecole des Ponts, 77455 Marne-la-Vall\'{e}e cedex 2, France. \email{alexandre.ern@enpc.fr} }
	\author{Alexandre Ern}
	\sameaddress{2, 1}
	\author{Jean-Luc Guermond}
	\address{ Department of Mathematics, Texas A\&M University, 3368 TAMU, College Station, TX 77843, USA. \email{guermond@math.tamu.edu}}
	\fi
	
	\date{\today}
	
	\ifHAL
	\else
	\begin{abstract}
		We estimate best-approximation errors using vector-valued finite 
		elements for fields with low regularity in the scale of fractional-order
		Sobolev spaces. By assuming additionally 
		that the target field has a curl or divergence property, we establish 
		upper bounds on these errors that can be localized to the mesh cells. 
		These bounds are derived using the quasi-interpolation errors with or
		without boundary prescription derived in [A. Ern and J.-L. Guermond,
		ESAIM Math. Model. Numer. Anal., 51 (2017), pp.~1367--1385]. 
		By using the face-to-cell lifting operators analyzed in 
		[A. Ern and J.-L. Guermond, Found. Comput. Math., (2021)],
		and exploiting the additional assumption made on the curl or the
		divergence of the target field, a localized upper bound on
		the quasi-interpolation error is derived. As an illustration,
		we show how to apply these results to the error analysis of the curl-curl
		problem associated with Maxwell's equations. 
	\end{abstract}
	\subjclass{65D05, 65N30, 41A65}
	\keywords{Quasi-interpolation, finite elements, best approximation.}
	\fi
	
	\maketitle
	
	\ifHAL
	\begin{abstract}
		We estimate best-approximation errors using vector-valued finite 
		elements for fields with low regularity in the scale of fractional-order
		Sobolev spaces. By assuming 
		that the target field enjoys an additional integrability property on its curl or 
		its divergence, we establish 
		upper bounds on these errors that can be localized to the mesh cells. 
		These bounds are derived using the quasi-interpolation errors with or
		without boundary prescription derived in [A. Ern and J.-L. Guermond,
		ESAIM Math. Model. Numer. Anal., 51 (2017), pp.~1367--1385].  
		In the present work, a localized upper bound on
		the quasi-interpolation error is derived
		by using the face-to-cell lifting operators analyzed in 
		[A. Ern and J.-L. Guermond, Found. Comput. Math., (2021)]
		and by exploiting the additional assumption made on the curl or the
		divergence of the target field. As an illustration,
		we show how to apply these results to the error analysis of the curl-curl
		problem associated with Maxwell's equations. 
	\end{abstract}
	\else
	\fi
	
	\section{Introduction}
	
	A central question in the finite element approximation theory is to establish 
	local upper bounds on the best-approximation error for functions
	that satisfy some minimal regularity assumptions 
	typically quantified in the scale of fractional-order Sobolev spaces.
	The goal of the present work is to derive some novel results in this context
	when the approximation is realized
	using Nédélec finite elements and Raviart--Thomas finite elements. 
	Most of our developments focus on the Nédélec finite elements 
	since they require more elaborate arguments. The corresponding results for
	the Raviart--Thomas finite elements only improve marginally the state of the art from 
	the literature, and only a short discussion is provided in a specific
	section of the paper.
	
	We are interested in approximating fields with a smoothness Sobolev 
	index that is so low that one cannot invoke the canonical 
	interpolation operators associated 
	with the considered finite elements. In this case, the best-approximation error can
	be bounded by considering quasi-interpolation errors, such as those
	derived in \cite{ErnGu:17_quasi}.
	However, the resulting upper bound cannot be localized to the mesh cells
	if the regularity of the target function is only measured in the scale 
	of fractional-order Sobolev spaces. The lack of localization is relatively mild
	if no boundary conditions are prescribed in the finite
	element spaces, since in this case the quasi-interpolation error
	can still be bounded by local contributions involving the fractional-order
	Sobolev seminorm of the target function over patches of mesh cells instead of just each
	mesh cell individually. 
	The lack of localization is more significant if boundary conditions are
	additionally prescribed in the finite element spaces since in this case
	the upper bound on the quasi-interpolation error is global. This is not surprising
	as in this 
	situation the target function has not enough smoothness to
	have a well-defined trace at the boundary.
	The main contribution of this work is to show that the localization becomes 
	possible provided
	some (mild) additional assumptions are made on the integrability of the curl or
	the divergence of the target function. The main tool to achieve this result hinges on
	the face-to-cell lifting operators 
	introduced in \cite{ErnGu:21_FOCM}.
	The additional assumptions on the curl or the divergence allow us to give a 
	weak meaning to the trace of the target function
	in the dual space of a suitable fractional-order Sobolev space.
	
	In this work, the space dimension is $d=3$ for the Nédélec finite elements 
	and $d\ge 2$ for the Raviart--Thomas finite elements. 
	For $d=2$, the results for the Raviart--Thomas elements
	can be transposed to the Nédélec elements by invoking a rotation of angle $\frac{\pi}{2}$;
	details are omitted for brevity (see, \eg \cite[Sec.~15.3.1]{Ern_Guermond_FEs_I_2021}).
	We consider a polyhedral Lipschitz domain $\Dom\subset \Real^d$. 
	Moreover, we use boldface for $\Real^d$-valued 
	fields and linear spaces composed of such fields. For instance, for real
	numbers $r\ge 0$ and $p\in [1,\infty]$ (we assume $p\in [1,\infty)$
	if $r\not\in\polN$), $\bW^{r,p}(\Dom)$ denotes the (fractional-order) Sobolev space
	equipped with the Sobolev--Slobodeckij norm.  
	
	Let $\famTh$ denote a shape-regular family of affine, matching, 
	simplicial meshes 
	such that each mesh covers $\Dom$ exactly. 
	Let $\bP\upc_k(\calT_h)$ denote the $\Hrot$-conforming finite element space
	built on the mesh $\calT_h$ using the Nédélec finite element of degree $k\in \polN$
	(here, the superscript ${}\upc$ refers to the curl operator). 
	Given a target field $\bv \in \bW^{r,p}(\Dom)$, with $r>0$ possibly very small,
	our goal is to establish localized upper bounds
	on the best-approximation error
	\begin{equation} \label{eq:best_Pc}
		\inf_{\bv_h\in \bP\upc_k(\calT_h)} \|\bv-\bv_h\|_{\bL^p(\Dom)}.
	\end{equation}
	
	The first natural idea is to invoke the canonical interpolation operator
	for Nédélec finite elements, say $\inter_h\upc$. Since this operator can only 
	act on those fields having an
	integrable tangential trace along all the mesh edges, invoking the standard trace theory
	in Sobolev spaces (see, \eg \cite{Grisvard_1992}) shows that a suitable domain
	for the canonical interpolation operator $\inter_h\upc$ is $\bW^{r,p}(\Dom)$ 
	with $rp>d-1=2$ (and $r\ge 2$ if $p=1$). 
	Assume that the polynomial degree is such that
	$k\ge1$ if $p\in [1,2]$ and $k\ge0$ otherwise, and let
	$r\in (\frac{2}{p},k+1]$ if $p>1$ or $r\in [2,k+1]$ if $p=1$. Then, 
	it is well-known (see, \eg \cite{Monk_2003}
	or \cite[Sec.~16.2]{Ern_Guermond_FEs_I_2021}) that there is
	$c$ such that for all $\bv\in \bW^{r,p}(\Dom)$,
	all $K\in\calT_h$, and all $\hinH$, we have
	\begin{equation}
		\|\bv-\inter_h\upc(\bv)\|_{\bL^p(K)} \le c\, h_K^{r} |\bv|_{\bW^{r,p}(K)},
	\end{equation}
	where $h_K$ denotes the diameter of the mesh cell $K\in\calT_h$. 
	Here, the symbol $c$ denotes
	a generic positive constant whose value can change at each occurrence provided it
	only depends on the mesh shape-regularity, the space dimension, and the polynomial
	degree $k$ of the considered finite elements.
	Notice that $c$ is unbounded as $r\downarrow \frac{2}{p}$ if $p>1$.
	For instance, in the Hilbert setting where $p=2$, the minimal regularity requirement is 
	$\bv\in \bH^r(\Dom)$ with $r>1$, and $c$ is unbounded as $r\downarrow1$. 
	
	The requirement $r>\frac2p$ can be lowered
	to $r>\frac12$ (with $p=2$) by invoking more sophisticated results on traces
	derived in \cite{AmBDG:98} which, however, hinge on some additional integrability assumption 
	on $\ROT\bv$. To use these results, the edge-based
	degrees of freedom of the Nédélec finite element are extended by defining them
	using edge-to-cell lifting operators and an integration by parts formula
	(see, \eg \cite[Sec.~17.3]{Ern_Guermond_FEs_I_2021}). One can then show 
	(see \cite{BofGa:06}; see also \cite{AloVa:99,CiaZo:99,BeRoS:05} for slight variants) that
	for all $r\in (\frac12,1]$ and all $p>2$, there is $c$ such that 
	for all $\bv\in \bH^{r}(\Dom)$ with $\ROT\bv\in \bL^p(\Dom)$,
	all $K\in\calT_h$, and all $\hinH$, we have
	\begin{equation}
		\|\bv-\inter_h\upc(\bv)\|_{\bL^2(K)} \le c\, \Big(h_K^{r} |\bv|_{\bH^{r}(K)}
		+ h_K^{1+d(\frac12-\frac1p)}\|\ROT\bv\|_{\bL^p(K)}\Big),
	\end{equation}
	with $c$ unbounded as $r\downarrow \frac12$ or $p\downarrow2$.
	
	Unfortunately, the regularity assumption $r>\frac12$ is often not realistic in applications. 
	To go beyond this assumption, one can invoke the quasi-interpolation operators devised
	in \cite{ErnGu:17_quasi}. Recall that the construction of 
	these quasi-interpolation operators consists of first  
	projecting the target 
	field $\bv$ onto a fully discontinuous finite element space and then stitching 
	together the projected
	field to recover the desired conformity property by averaging the canonical degrees 
	of freedom of the projected field. Since the projected field is always piecewise smooth,
	this construction is always meaningful, regardless of the regularity of the target field 
	$\bv$. Let $\inter\upcav_h: \bL^1(\Dom)\to \bP\upc_k(\calT_h)$ denote the quasi-interpolation 
	operator thus constructed with the Nédélec finite elements. 
	Then, \cite[Thm.~22.6]{Ern_Guermond_FEs_I_2021} shows that there is 
	$c$ such that for all $r\in [0,k+1]$, all $p\in [1,\infty]$ if $r\in\polN$
	and $p\in[1,\infty)$ otherwise,
	all $\bv\in \bW^{r,p}(\Dom)$, all $K\in\calT_h$, and all $\hinH$, we have
	\begin{equation} \label{eq:inter_upcav}
		\|\bv-\inter_h\upcav(\bv)\|_{\bL^p(K)} \le c\, h_K^{r} |\bv|_{\bW^{r,p}(\Dom_K\upc)},
	\end{equation}
	where $\Dom_K\upc\eqq \interior\big(\bigcup_{K'\in\calT_K\upc} K'\big)$ and $\calT_K\upc$ 
	denotes the collection of mesh
	cells sharing at least one edge with $K$. We notice that a slight loss of localization occurs 
	in~\eqref{eq:inter_upcav} since the Sobolev--Slobodeckij seminorm on the right-hand side
	is evaluated over the macroelement $\Dom_K\upc$ and not just over $K$. 
	In the present work, we show that provided some (mild)
	additional integrability assumption is made on $\ROT\bv$, the 
	estimate~\eqref{eq:inter_upcav} can be localized to the mesh cells
	in $\calT\upc_K$; see Theorem~\ref{th:loc_av_Ned}
	and Corollary~\ref{cor:glob_av_Ned}.
	
	The loss of localization is more striking if one wants to additionally enforce a
	homogeneous boundary condition on the tangential component of the target field. We
	assume for
	simplicity that the condition is enforced over the whole boundary $\front$ of $\Dom$.
	Recall that the tangential trace operator $\gamma\upc:\Hrot \to \bH^{-\frac12}(\Dom)$
	is defined through a global integration by parts formula (see, \eg 
	\cite[Thm.~4.15]{Ern_Guermond_FEs_I_2021}) and that we have
	$\gamma\upc(\bv)\eqq \bv_{|\front}\times \bn_\Dom$ whenever the field $\bv$ is smooth enough, 
	where $\bn_\Dom$ denotes the unit outward normal to $\Dom$. Then, setting
	$\bP\upc_{k,0}(\calT_h)\eqq \{\bv_h\in\bP\upc_k(\calT_h)\tq \gamma\upc(\bv_h)
	=\bzero\}$, one is interested in establishing local upper bounds
	on the best-approximation error
	\begin{equation} \label{eq:best_Pcz}
		\inf_{\bv_h\in \bP\upc_{k,0}(\calT_h)} \|\bv-\bv_h\|_{\bL^p(\Dom)}.
	\end{equation}
	Let $\inter_{h0}\upcav:\bL^1(\Dom)\to \bP\upc_{k,0}(\calT_h)$ 
	denote the quasi-interpolation operator with homogeneous boundary prescription
	associated with the Nédélec finite elements. Then, 
	\cite[Thm.~22.14]{Ern_Guermond_FEs_I_2021}) shows that 
	for all $r\in [0,\frac1p)$, there is $c$ such that
	for all $\bv\in \bW^{r,p}(\Dom)$, and all $\hinH$, we have
	\begin{equation} \label{eq:inter_upcavz}
		\|\bv-\inter_{h0}\upcav(\bv)\|_{\bL^p(\Dom)} \le c\, h^{r}\ell_\Dom^{-r} \|\bv\|_{\bW^{r,p}(\Dom)},
	\end{equation}
	where $h\eqq \max_{K\in\calT_h} h_K$, $\ell_\Dom$ is a characteristic (global) length scale 
	associated with $\Dom$, and $\|\bv\|_{\bW^{r,p}(\Dom)} = 
	\|\bv\|_{\bL^p(\Dom)} + \ell_\Dom^r |\bv|_{\bW^{r,p}(\Dom)}$.
	Notice that the target field $\bv$ has not sufficient regularity to have a well-defined
	tangential trace on the boundary.
	The loss of localization in~\eqref{eq:inter_upcavz} arises when bounding 
	the quasi-interpolation error over those mesh cells that have at least one edge located on the boundary $\front$ 
	(the upper bound~\eqref{eq:inter_upcav} holds true for the other mesh cells).
	The presence of the global length scale $\ell_\Dom$ and of the full 
	Sobolev--Slobodeckij norm of $\bv$ instead of just the seminorm in \eqref{eq:inter_upcavz}
	comes from the need to invoke a Hardy inequality near the boundary (see the proof of 
	\cite[Thm.~6.4]{ErnGu:17_quasi}). In the present work, we show that provided some 
	(mild) additional integrability assumption is made on $\ROT\bv$, the 
	estimate~\eqref{eq:inter_upcavz} can be fully localized to the mesh cells; see again 
	Theorem~\ref{th:loc_av_Ned} and Corollary~\ref{cor:glob_av_Ned}.

	\section{Main results on Nédélec finite elements} \label{sec:results}
	
	In this section, we first state our main results and then present their proofs.
	
	\subsection{Statement of the main results}
	
	Let us first observe that the domain of the tangential trace operator can be
	extended to $\bY\upc(\Dom)\eqq \{\bv\in \Ldeuxd\st \ROT\bv\in \bL^q(\Dom)\}$
	for all $q\in (\frac{2d}{2+d},2]$. Indeed, for all $\bv\in \bY\upc(\Dom)$,
	$\gamma\upc(\bv)\in \bH^{-\frac12}(\front)$ can still be defined by duality 
	by setting for all $\bw\in \bH^{\frac12}(\front)$,
	\begin{equation} \label{eq:ext_tracec_glob}
		\langle \gamma\upc(\bv),\bw\rangle_{\front} \eqq \int_\Dom \Big(\bv\SCAL\ROT\bl(\bw)-
		(\ROT\bv)\SCAL\bl(\bw)\Big)\dif x,
	\end{equation}
	where $\bl(\bw)$ denotes a lifting of $\bw$ in
	$\bH^1(\Dom)$. Indeed, owing to the Sobolev embedding theorem,
	and the fact that $q> \frac{2d}{2+d}$, we infer that
	$\bH^1(\Dom)\hookrightarrow \bL^{q'}(\Dom)$ with $\frac1q+\frac{1}{q'}=1$,
	so that the second term on the right-hand side of \eqref{eq:ext_tracec_glob}
	is meaningful owing to H\"older's inequality.
	We can now state our main result. For simplicity, we estimate the
	quasi-interpolation error only in the $\bL^2$-norm.
	
	\begin{theorem}[Localized quasi-interpolation error estimate for Nédélec elements]
		\label{th:loc_av_Ned}
		For all $r\in (0,1]$ and all $q\in (\frac{2d}{2+d},2]$,
		there is $c$ such that for
		all $\bv\in \bH^r(\Dom)$ with $\ROT\bv\in \bL^q(\Dom)$, all $K\in\calT_h$,
		and all $\hinH$, we have
		\begin{equation} \label{eq:loc_avc}
			\|\bv-\inter\upcav_h(\bv)\|_{\bL^2(K)} \le
			c\sum_{K'\in\calT_K\upc} \Big\{ 
			h_{K'}^r |\bv|_{\bH^r(K')} + h_{K'}^{1+d(\frac12-\frac1q)} \|\ROT\bv\|_{\bL^q(K')}\Big\}.
		\end{equation}
		Moreover, assuming that $\gamma\upc(\bv)=\bzero$, we also have
		\begin{equation} \label{eq:loc_avcz}
			\|\bv-\inter\upcav_{h0}(\bv)\|_{\bL^2(K)} \le
			c\sum_{K'\in\calT_K\upc} \Big\{ 
			h_{K'}^r |\bv|_{\bH^r(K')} + h_{K'}^{1+d(\frac12-\frac1q)} \|\ROT\bv\|_{\bL^q(K')}\Big\}.
		\end{equation}
	\end{theorem}
	
	Squaring the above inequalities, summing over the mesh cells, and observing that
	the cardinality of the set $\{K'\in\calT_h\tq K\in\calT\upc_{K'}\}$ is uniformly
	bounded for all $K\in\calT_h$ and all $\hinH$, we infer the following result.
	
	\begin{corollary}[Localized best-approximation error for Nédélec elements]
		\label{cor:glob_av_Ned}
		For all $r\in (0,1]$ and all $q\in (\frac{2d}{2+d},2]$,
		there is $c$ such that for
		all $\bv\in \bH^r(\Dom)$ with $\ROT\bv\in \bL^q(\Dom)$, 
		and all $\hinH$, we have
		\begin{equation} \label{eq:glob_avc}
			\inf_{\bv_h\in \bP\upc_k(\calT_h)}\|\bv-\bv_h\|_{\bL^2(\Dom)} \le
			c\, \bigg\{ \sum_{K\in\calT_h} \Big\{ 
			h_{K}^{2r} |\bv|_{\bH^r(K)}^2 + h_{K}^{2+2d(\frac12-\frac1q)} \|\ROT\bv\|_{\bL^q(K)}^2\Big\} \bigg\}^{\frac12}.
		\end{equation}
		Moreover, assuming that $\gamma\upc(\bv)=\bzero$, we also have
		\begin{equation} \label{eq:glob_avcz}
			\inf_{\bv_h\in \bP_{k,0}\upc(\calT_h)}\|\bv-\bv_h\|_{\bL^2(\Dom)} \le
			c\,\bigg\{\sum_{K\in\calT_h} \Big\{
			h_{K}^{2r} |\bv|_{\bH^r(K)}^2 + h_{K}^{2+2d(\frac12-\frac1q)} \|\ROT\bv\|_{\bL^q(K)}^2\Big\} \bigg\}^{\frac12}.
		\end{equation}
	\end{corollary}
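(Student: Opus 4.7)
The plan is to deduce the corollary directly from Theorem~\ref{th:loc_av_Ned} by turning the pointwise (cell-wise) bound into a global $\bL^2$-bound. The first move is to choose $\bv_h$ as the relevant quasi-interpolant: take $\bv_h = \inter\upcav_h(\bv)$ in~\eqref{eq:glob_avc} (which is admissible, since $\inter\upcav_h(\bv)\in \bP\upc_k(\calT_h)$) and $\bv_h = \inter\upcav_{h0}(\bv)$ in~\eqref{eq:glob_avcz} (admissible under $\gamma\upc(\bv)=\bzero$, since then $\inter\upcav_{h0}(\bv)\in \bP\upc_{k,0}(\calT_h)$). Then the left-hand sides are bounded by $\|\bv-\inter\upcav_h(\bv)\|_{\bL^2(\Dom)}$ and $\|\bv-\inter\upcav_{h0}(\bv)\|_{\bL^2(\Dom)}$ respectively, which in turn decompose as sums of squared $\bL^2(K)$-norms over the cells $K\in\calT_h$.

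Next, I square the local bound~\eqref{eq:loc_avc} (respectively~\eqref{eq:loc_avcz}) for each $K$ and sum over $K\in\calT_h$. Since $\card(\calT\upc_K)$ is uniformly bounded by mesh shape-regularity, the Cauchy--Schwarz inequality turns the right-hand side into
\begin{equation*}
\sum_{K\in\calT_h}\Big(\sum_{K'\in\calT\upc_K} a_{K'}\Big)^{2}
\le c\sum_{K\in\calT_h}\sum_{K'\in\calT\upc_K} a_{K'}^{2},
\end{equation*}
where $a_{K'}\eqq h_{K'}^r |\bv|_{\bH^r(K')} + h_{K'}^{1+d(\frac12-\frac1q)} \|\ROT\bv\|_{\bL^q(K')}$. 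I then swap the order of summation: the inner double sum equals $\sum_{K'\in\calT_h}\card\{K\in\calT_h\tq K'\in\calT\upc_K\}\,a_{K'}^{2}$. By symmetry of the neighbor relation, $K'\in\calT\upc_K$ iff $K\in\calT\upc_{K'}$, and $\card\{K\in\calT_h\tq K\in\calT\upc_{K'}\}$ is uniformly bounded by mesh shape-regularity. Using also the elementary inequality $(x+y)^2\le 2(x^2+y^2)$, I arrive at
\begin{equation*}
\sum_{K\in\calT_h}\|\bv-\inter\upcav_h(\bv)\|_{\bL^2(K)}^{2}
\le c\sum_{K\in\calT_h}\Big\{h_{K}^{2r} |\bv|_{\bH^r(K)}^2 + h_{K}^{2+2d(\frac12-\frac1q)} \|\ROT\bv\|_{\bL^q(K)}^2\Big\},
\end{equation*}
and the same for $\inter\upcav_{h0}$. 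Taking the square root yields~\eqref{eq:glob_avc} and~\eqref{eq:glob_avcz}.

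There is really no serious obstacle here: the whole content has been placed in Theorem~\ref{th:loc_av_Ned}. The only thing to watch is the bookkeeping, namely, that the patch structure produces only a uniformly bounded number of copies of each cell contribution upon reordering the double sum, which is standard under shape-regularity of $\famTh$.
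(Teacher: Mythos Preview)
Your proof is correct and follows exactly the approach sketched in the paper: square the local estimates from Theorem~\ref{th:loc_av_Ned}, sum over the cells, and use the uniformly bounded overlap of the patches $\calT_K\upc$ to reorder the double sum. One small imprecision: $\inter\upcav_{h0}(\bv)\in \bP\upc_{k,0}(\calT_h)$ holds by construction regardless of $\gamma\upc(\bv)$; the hypothesis $\gamma\upc(\bv)=\bzero$ is needed only so that the local bound~\eqref{eq:loc_avcz} applies.
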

	
	\begin{remark}[Exponent]
		Observe that $2+2d(\frac12-\frac1q)=2d(\frac{d+2}{2d}-\frac1q)>0$
		since $q\in (\frac{2d}{2+d},2]$. Moreover, we have
		$2+2d(\frac12-\frac1q)=2$ for $q=2$.
	\end{remark}

	\subsection{Preliminary: localizing the tangential trace to the mesh faces}
	
	This section collects some preliminary results needed in the proof of 
	Theorem~\ref{th:loc_av_Ned}. These results are drawn from \cite{ErnGu:21_FOCM}
	and are briefly restated here for the reader's convenience.
	
	Let $K\in \calT_h$ be a mesh cell, let $\calF_K$ be the collection of the
	faces of $K$, and let $F\in \calF_K$.
	To define a tangential trace that is localized to the mesh face $F$, we introduce 
	the local functional space $\bV\upc(K)\eqq \{\bv\in \bL^p(K)\st \ROT\bv\in \bL^q(K)\}$
	with $q\in (\frac{2d}{2+d},2]$ (as above) and $p>2$. We equip this space with the 
	(dimensionally consistent) norm
	\begin{equation} \label{eq:norm_VupcK}
		\|\bv\|_{\bV\upc(K)} := \|\bv\|_{\bL^p(K)} + h_K^{1+d(\frac1p-\frac1q)} \|\ROT\bv\|_{\bL^q(K)}.
	\end{equation}
	Let $\varrho\in (2,p]$ be such that
	$q\ge \frac{\varrho d}{\varrho + d}$ (this is indeed possible since the function
	$x\mapsto \frac{xd}{x+d}$ is increasing on $[2,\infty)$). Let $\varrho'\in [1,2)$
	be such that $\frac{1}{\varrho}+\frac{1}{\varrho'}=1$. We consider the 
	(fractional-order) Sobolev space $\bW^{\frac{1}{\varrho},\varrho'}(F)$,
	equipped with the (dimensionally consistent) norm 
	\begin{equation} \label{eq:def_norm_varrho}
		\|\bphi\|_{\bW^{\frac{1}{\varrho},\varrho'}(F)} 
		\eqq \|\bphi\|_{\bL^{\varrho'}(F)}+h_F^{\frac{1}{\varrho}}
		|\bphi|_{\bW^{\frac{1}{\varrho},\varrho'}(F)}.
	\end{equation}
	Let $(\bW^{\frac{1}{\varrho},\varrho'}(F))'$ denote the dual space of 
	$\bW^{\frac{1}{\varrho},\varrho'}(F)$.
	It is shown in \cite[Equ.~(5.5)]{ErnGu:21_FOCM} that upon introducing suitable face-to-cell lifting
	operators, it is possible to define  a tangential
	trace operator localized to the mesh face $F$ through an integration by parts formula, namely $\gamma\upc_{K,F}:\bV\upc(K) \to
	(\bW^{\frac{1}{\varrho},\varrho'}(F))'$, such that the following two properties hold:
	\textup{(i)} $\gamma_{K,F}(\bv) = (\bv\times\bn_K)_{|F}$ whenever the field $\bv$ is smooth,
	where $\bn_K$ denotes the unit normal to $\partial K$ pointing outward $K$;
	\textup{(ii)} There is $c$ such that for all $\bv\in \bV\upc(K)$, all $K\in\calT_h$,
	and all $\hinH$,
	\begin{equation} \label{eq:bnd_gamma}
		\|\gamma_{K,F}\upc(\bv)\|_{(\bW^{\frac{1}{\varrho},\varrho'}(F))'} \le c\, 
		h_K^{-\frac{1}{\varrho}+d(\frac{1}{\varrho}-\frac1p)} \|\bv\|_{\bV\upc(K)}.
	\end{equation}
	
	Let $\bpolN^k(K)$ be composed of the
	restriction to $K$ of the Nédélec polynomials of order $k\in\polN$. 
	Let us set $\bpolN^k(F)\eqq \gamma_{K,F}\upc(\bpolN^k(K))$ for all $F\in\calF_K$.
	In this work, we need to invoke the following inverse inequality. 
	
	\begin{lemma}[Inverse inequality on $F$] \label{eq:inv_ineq_F}
		Let $t\in [1,\infty]$. There is $c$
		such that for all $\bphi_h\in \bpolN^k(F)$, all $K\in\calT_h$, all $F\in\calF_K$,
		and all $\hinH$,
		\begin{equation}
			\|\bphi_h\|_{\bL^t(F)} \le c\, h_K^{(d-1)(\frac{1}{t}-\frac{1}{\varrho})}
			\|\bphi_h\|_{(\bW^{\frac{1}{\varrho},\varrho'}(F))'}.
		\end{equation}
	\end{lemma}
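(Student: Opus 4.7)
The plan is the standard scaling argument: pull $\bphi_h$ back to a reference face $\widehat F$ via an affine map, invoke norm equivalence on the finite-dimensional space $\bpolN^k(\widehat F)$, and transport the estimate back to $F$ by tracking how the two sides scale with $h_F$ (which is comparable to $h_K$ by shape-regularity). Since a simplex has only finitely many faces and the family of meshes is shape-regular, one needs to handle only finitely many reference configurations, so that a single generic constant suffices.

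Using the affine map $x = h_F \widehat{x} + x_0$ from $\widehat F$ to $F$, the $\bL^t$-norm scales as $\|\bphi_h\|_{\bL^t(F)} = h_F^{(d-1)/t}\|\widehat{\bphi}_h\|_{\bL^t(\widehat F)}$. A direct change of variables in the Slobodeckij double integral gives $|\bphi|_{\bW^{1/\varrho,\varrho'}(F)} = h_F^{(d-1)/\varrho' - 1/\varrho}\,|\widehat{\bphi}|_{\bW^{1/\varrho,\varrho'}(\widehat F)}$, so the weight $h_F^{1/\varrho}$ built into \eqref{eq:def_norm_varrho} is exactly what makes the two contributions of $\|\cdot\|_{\bW^{1/\varrho,\varrho'}(F)}$ scale with the common exponent $h_F^{(d-1)/\varrho'}$. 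Viewing the duality bracket as the natural $L^2$ integration $\int_F \bphi_h \cdot \bw$ (legitimate here since $\bphi_h$ is a tangent polynomial on $F$), which picks up a Jacobian $h_F^{d-1}$, the dual norm therefore scales as $\|\bphi_h\|_{(\bW^{1/\varrho,\varrho'}(F))'} = h_F^{(d-1)(1-1/\varrho')}\,\|\widehat{\bphi}_h\|_{(\bW^{1/\varrho,\varrho'}(\widehat F))'} = h_F^{(d-1)/\varrho}\,\|\widehat{\bphi}_h\|_{(\bW^{1/\varrho,\varrho'}(\widehat F))'}$. Combining the two exponents produces precisely the factor $h_F^{(d-1)(1/t - 1/\varrho)}$ appearing in the statement, which is equivalent to $h_K^{(d-1)(1/t - 1/\varrho)}$.

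It remains to establish the analogous estimate on the reference face, namely $\|\widehat{\bphi}_h\|_{\bL^t(\widehat F)} \le c\,\|\widehat{\bphi}_h\|_{(\bW^{1/\varrho,\varrho'}(\widehat F))'}$ for all $\widehat{\bphi}_h\in \bpolN^k(\widehat F)$. Since $\bpolN^k(\widehat F)$ is finite-dimensional, all norms on it are equivalent, so it suffices to check that the right-hand side is a bona fide norm (not just a seminorm) on this space. Nondegeneracy reduces to the implication: if $\int_{\widehat F}\widehat{\bphi}_h \cdot \widehat{\bw}\, d\widehat s = 0$ for every $\widehat{\bw}\in \bW^{1/\varrho,\varrho'}(\widehat F)$, then $\widehat{\bphi}_h = \bzero$. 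This follows at once from the density of $\bW^{1/\varrho,\varrho'}(\widehat F)$ in $\bL^{\varrho'}(\widehat F)$, since the polynomial $\widehat{\bphi}_h$ is then $\bL^{\varrho'}$-orthogonal to a dense subspace and must vanish. The main obstacle, mild as it is, is the careful bookkeeping of the scaling exponents — in particular verifying that the weight $h_F^{1/\varrho}$ in \eqref{eq:def_norm_varrho} is precisely what allows the $\bL^{\varrho'}$-part and the Slobodeckij seminorm to scale with the same power of $h_F$; once this is in place, the finite-dimensional norm equivalence on $\widehat F$ closes the argument.
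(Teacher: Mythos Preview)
Your argument is correct, but it differs from the paper's. The paper proceeds by duality: it first records the standard inverse inequality
\[
\|\bxi_h\|_{\bW^{\frac{1}{\varrho},\varrho'}(F)} \le c\, h_K^{(d-1)(\frac{1}{\varrho'}-\frac{1}{t'})}\|\bxi_h\|_{\bL^{t'}(F)}
\]
for polynomial test functions $\bxi_h\in\bpolN^k(F)$ (using that $\bpolN^k(F)\subset \bpolP^{k+1}_{d-1}\circ\bT_F^{-1}$), then represents $\|\bphi_h\|_{\bL^t(F)}$ as a supremum of $\int_F\bphi_h\SCAL\bxi_h\,\dif s/\|\bxi_h\|_{\bL^{t'}(F)}$ over $\bxi_h\in\bpolN^k(F)$, and bounds the denominator from below by the above inequality to pass to the dual $(\bW^{\frac{1}{\varrho},\varrho'}(F))'$-norm. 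Your route is instead a direct scaling argument: pull back to a reference face, invoke norm equivalence on the finite-dimensional space there, and track the powers of $h_F$ on the way back. Both are standard proofs of inverse inequalities; the paper's version reuses an off-the-shelf inequality and is a bit shorter, while yours is self-contained and makes the role of the weight $h_F^{1/\varrho}$ in \eqref{eq:def_norm_varrho} fully explicit. One small remark: the map $\bT_F$ is affine but not a pure homothety, so your exact equalities for the scalings should really be two-sided equivalences with constants controlled by shape-regularity (and ``finitely many reference configurations'' is not quite the right justification---it is the uniform bounds on $\|B\|$, $\|B^{-1}\|$, $|\det B|$ from shape-regularity that do the work); this does not affect the conclusion.
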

	
	\begin{proof}
		Let $\bxi_h\in \bpolN^k(F)$. Recalling the definition~\eqref{eq:def_norm_varrho}
		of the $\|\SCAL\|_{\bW^{\frac{1}{\varrho},\varrho'}(F)}$-norm and invoking an inverse inequality
		on $\bpolN^k(F)$ (see, \eg \cite[Sec.~12.1]{Ern_Guermond_FEs_I_2021}, 
		and observe that $\bpolN^k(F) \subset \bpolP^{k+1}_{d-1}\circ \bT_F^{-1}$, where
		$\bT_F:\wS^{d-1}\to F$ is the geometric mapping from the reference $(d-1)$-dimensional
		simplex $\wS^{d-1}$ to $F$ and where $\bpolP^{k+1}_{d-1}$ is composed
		of $\Real^d$-valued, $(d-1)$-variate polynomials of order at most $(k+1)$), we infer that
		\[
		\|\bxi_h\|_{\bW^{\frac{1}{\varrho},\varrho'}(F)} \le c\, h_K^{(d-1)(\frac{1}{\varrho'}-\frac{1}{t'})}
		\|\bxi_h\|_{\bL^{t'}(F)},
		\]
		with $t'\in [1,\infty]$ such that $\frac{1}{t}+\frac{1}{t'}=1$.
		This implies that
		\[
		\|\bphi_h\|_{\bL^t(F)}  = \sup_{\bxi_h\in \bpolN^k(F)} \frac{\int_F \bphi_h\SCAL \bxi_h\dif s}{\|\bxi_h\|_{\bL^{t'}(F)}} \le ch_K^{(d-1)(\frac{1}{\varrho'}-\frac{1}{t'})} \sup_{\bxi_h\in \bpolN^k(F)} \frac{\int_F \bphi_h\SCAL \bxi_h\dif s}{\|\bxi_h\|_{\bW^{\frac{1}{\varrho},\varrho'}(F)}}.
		\]
		Since $\int_F \bphi_h\SCAL \bxi_h\dif s = \langle \bphi_h,\bxi_h\rangle_{(\bW^{\frac{1}{\varrho},\varrho'}(F))',\bW^{\frac{1}{\varrho},\varrho'}(F)}$, the assertion follows from the definition of the dual norm in $(\bW^{\frac{1}{\varrho},\varrho'}(F))'$ and the identity $\frac{1}{\varrho'}-\frac{1}{t'}=\frac{1}{t}-\frac{1}{\varrho}$.
	\end{proof}
	
	Let us set
	\begin{align}
		\bV\upc(\Dom)&\eqq \{\bv\in \bL^p(\Dom)\st \ROT\bv\in \bL^q(\Dom)\}, \label{eq:def_Vc_Dom} \\
		\bV\upc_0(\Dom)&\eqq \{\bv\in \bV\upc(\Dom)\tq \gamma\upc(\bv)=\bzero\}.
	\end{align}
	(Observe that the tangential trace operator $\gamma\upc$ is meaningful
	on $\bV\upc(\Dom)$ since $p>2$.) \ale{Proceeding as in 
		\cite[Thm.~4.15]{Ern_Guermond_FEs_I_2021}, one can show that $\bV\upc_0(\Dom)$
		coincides with the closure of $\bC_0^\infty(\Dom)$ in $\bV\upc(\Dom)$.}
	We notice that for all $\bv\in \bV\upc(\Dom)$
	and all $K\in\calT_h$, we have $\bv_{|K}\in \bV\upc(K)$. We also define the broken
	version of $\bV\upc(\Dom)$ as follows:
	\begin{equation}
		\bV\upc(\calT_h)\eqq \{\bv\in \bL^p(\Dom)\st \bv_{|K}\in \bV\upc(K),\, \forall K\in\calT_h\}.
	\end{equation}
	
	The collection of the mesh faces, $\calFh$, is split into the collection of the
	mesh interfaces, $\calFhi$, and the collection of the mesh boundary faces, $\calFhb$.
	For all $F\in\calFhi$, there are two distinct mesh cells $K_-,K_+\in\calT_h$ such that
	$F=\partial K_-\cap \partial K_+$. For all $F\in \calFhb$, there is one mesh cell
	$K_-\in\calT_h$ such that $F=\partial K_-\cap \front$. 
	For every field $\bv\in \bV\upc(\calT_h)$, the jump of the tangential component across
	the mesh interface $F=\partial K_-\cap \partial K_+\in\calFhi$ is defined as
	\begin{equation}
		\jump{\gamma_{K,F}(\bv)}_F \eqq \gamma_{K_-,F}(\bv_{|K_-})+\gamma_{K_+,F}(\bv_{|K_+}).
	\end{equation}
	Moreover, for every mesh boundary face $F=\partial K_-\cap \front\in\calFhb$,
	we conventionally set
	\begin{equation}
		\jump{\gamma_{K,F}(\bv)}_F \eqq \gamma_{K_-,F}(\bv_{|K_-}).
	\end{equation}
	
	\begin{lemma}[Vanishing jumps and boundary traces]
		\label{lem:jump_trace}
		\textup{(i)} For all $\bv\in\bV\upc(\Dom)$, we have $\jump{\gamma_{K,F}(\bv)}_F=\bzero$ for
		all $F\in\calFhi$.
		\textup{(ii)} For all $\bv\in\bV\upc_0(\Dom)$, we additionally have 
		$\jump{\gamma_{K,F}(\bv)}_F=\bzero$ for all $F\in\calFhb$.
	\end{lemma}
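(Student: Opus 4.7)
The plan is to test $\jump{\gamma_{K,F}(\bv)}_F$ against an arbitrary $\bphi\in\bW^{\frac{1}{\varrho},\varrho'}(F)$ and to bridge the local integration-by-parts formulae defining $\gamma_{K,F}\upc$ with the global formula~\eqref{eq:ext_tracec_glob}. Recall that the local formula can be written as
$$\langle \gamma_{K,F}\upc(\bv), \bphi\rangle_F = \int_K\bigl(\bv\SCAL\ROT\bl_{K,F}(\bphi)-\ROT\bv\SCAL\bl_{K,F}(\bphi)\bigr)\dif x,$$
where $\bl_{K,F}(\bphi)\in \bH^1(K)$ is a face-to-cell lifting whose trace on $\partial K\setminus F$ vanishes; crucially, the value of the right-hand side is independent of the admissible lifting chosen. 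Since $\bv\in\bV\upc(\Dom)\subset\bY\upc(\Dom)$, the global formula~\eqref{eq:ext_tracec_glob} is simultaneously available.

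For part~(i), let $F=\partial K_-\cap\partial K_+$ be an interior face. I would assemble the face-to-cell liftings on $K_-$ and $K_+$ (using a common symmetric construction on the patch $\overline{K_-}\cup\overline{K_+}$ if necessary) and extend by zero to obtain a global test field $\bL\in\bH^1(\Dom)$ with $\bL|_\front=\bzero$. Then \eqref{eq:ext_tracec_glob} gives $\int_\Dom\bigl(\bv\SCAL\ROT\bL-\ROT\bv\SCAL\bL\bigr)\dif x = \langle\gamma\upc(\bv),\bzero\rangle_\front=0$, while splitting the integral cell-by-cell and invoking the local definition of $\gamma_{K_\pm,F}\upc$ identifies this same quantity with $\langle\gamma_{K_-,F}\upc(\bv)+\gamma_{K_+,F}\upc(\bv),\bphi\rangle_F$, which therefore vanishes for every $\bphi$.

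For part~(ii), let $F=\partial K_-\cap\front$ be a boundary face and assume $\gamma\upc(\bv)=\bzero$. I would take $\bL$ to be the extension by zero to $\Dom$ of a face-to-cell lifting of $\bphi$ on $K_-$. Then $\bL\in\bH^1(\Dom)$, its tangential trace on $\front$ is supported in $F$ and compatible with $\bphi$ there, and the global formula~\eqref{eq:ext_tracec_glob} combined with the hypothesis $\gamma\upc(\bv)=\bzero$ immediately gives $\langle\gamma_{K_-,F}\upc(\bv),\bphi\rangle_F=0$ for all $\bphi$.

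The main technical hurdle is the assembly step: one must verify that the face-to-cell liftings of~\cite{ErnGu:21_FOCM} can be pieced together into a single $\bH^1$-regular field $\bL$ on $\Dom$ with the prescribed trace properties — vanishing on $\front$ in case~(i), and matching $\bphi$ on $F$ while vanishing on the remaining boundary faces in case~(ii). This reduces to trace-compatibility across the shared face and trace-vanishing on $\partial K\setminus F$, both of which are intrinsic to the lifting construction; granted these, the lemma is a cellwise splitting of the global integration-by-parts identity.
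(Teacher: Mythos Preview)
Your proposal follows the paper's proof closely: assemble the face-to-cell liftings on $K_\pm$ (extended by zero elsewhere) into a single global field on $\Dom$, then read the sum of the local formulae as a global integration-by-parts identity that vanishes either because the global field has zero boundary trace (case~(i)) or because $\gamma\upc(\bv)=\bzero$ (case~(ii)). One caveat: the liftings $E_F^K$ from \cite{ErnGu:21_FOCM} land in $\bW^{1,\varrho'}(K)$ with $\varrho'<2$, not in $\bH^1(K)$ as you write, so the glued field lies only in $\bW^{1,\varrho'}(\Dom)$ and your direct appeal to~\eqref{eq:ext_tracec_glob} (which tests against $\bH^1$-liftings) requires a routine density step; the paper works directly with this $\bW^{1,\varrho'}$-regularity.
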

	
	\begin{proof}
		Let $\bv\in\bV\upc(\Dom)$ and let $F\in\calFh$. For all $K\in\calT_h$
		such that $F\in\calFK$, $\gamma_{K,F}(\bv)$ is defined in
		\cite[Equ.~(5.5)]{ErnGu:21_FOCM} so that
		\[
		\langle \gamma_{K,F}(\bv),\bphi\rangle_{(\bW^{\frac{1}{\varrho},\varrho'}(F))',
			\bW^{\frac{1}{\varrho},\varrho'}(F)} \eqq \int_K \Big( 
		\bv\SCAL\ROT E_F^K(\bphi) - (\ROT\bv)\SCAL E_F^K(\bphi)\Big)\dif x,
		\]
		for all $\bphi\in \bW^{\frac{1}{\varrho},\varrho'}(F)$,
		where $E_F^K:\bW^{\frac{1}{\varrho},\varrho'}(F)\to \bW^{1,\varrho'}(K)$ is the
		face-to-cell lifting operator from \cite[Def.~5.1]{ErnGu:21_FOCM}.
		\\
		\textup{(i)} Let $F=\partial K_-\cap\partial K_+\in\calFhi$.
		We define the global lifting operator $E_F^\Dom :
		\bW^{\frac{1}{\varrho},\varrho'}(F)\to \bW^{1,\varrho'}(\Dom)$ such that
		$E_F^\Dom(\bphi)_{|K_\pm} \eqq E_F^{K_\pm}(\bphi)$ and $E_F^\Dom(\bphi)=\bzero$
		otherwise. Summing the above identity for $K\in\{K_-,K_+\}$, we infer that
		\begin{equation} \label{id_jump}
			\langle \jump{\gamma_{K,F}(\bv)}_F,\bphi\rangle_{(\bW^{\frac{1}{\varrho},\varrho'}(F))',
				\bW^{\frac{1}{\varrho},\varrho'}(F)} \eqq \int_\Dom \Big( 
			\bv\SCAL\ROT E_F^\Dom(\bphi) - (\ROT\bv)\SCAL E_F^\Dom(\bphi)\Big)\dif x,
		\end{equation}
		for all $\bphi\in \bW^{\frac{1}{\varrho},\varrho'}(F)$.  Since,
		by construction, $E_F^\Dom(\bphi)$ has a zero trace at the
		boundary of $\Dom$ \ale{and since $\bV\upc_0(\Dom)$
			coincides with the closure of $\bC_0^\infty(\Dom)$ in $\bV\upc(\Dom)$}, we conclude that
		\[
		\langle \jump{\gamma_{K,F}(\bv)}_F,\bphi\rangle_{(\bW^{\frac{1}{\varrho},\varrho'}(F))',\bW^{\frac{1}{\varrho},\varrho'}(F)}=0\ale{.}
		\] 
		\ale{S}ince $\bphi$ is arbitrary in 
		$\bW^{\frac{1}{\varrho},\varrho'}(F)$, this implies that $\jump{\gamma_{K,F}(\bv)}_F=\bzero$.
		\\
		\textup{(ii)} If $F=\partial K_-\cap\front\in\calFhb$, we set
		$E_F^\Dom(\bphi)_{|K_-} \eqq E_F^{K_-}(\bphi)$ and $E_F^\Dom(\bphi)=\bzero$
		otherwise. This yields again \eqref{id_jump}, and invoking that $\gamma\upc(\bv)=\bzero$
		still gives 
		$\langle \jump{\gamma_{K,F}(\bv)}_F,\bphi\rangle_{(\bW^{\frac{1}{\varrho},\varrho'}(F))',
			\bW^{\frac{1}{\varrho},\varrho'}(F)}=0$, whence the assertion.
	\end{proof}
	
	\subsection{Proof of~\eqref{eq:loc_avc}}
	
	Let us start with a preliminary result of independent interest.
	For all $K\in\calT_h$, let $\Pi^0_K:\bL^1(K)\to \bpolP^0(K)$ denote the (local)
	$\bL^2$-orthogonal projection onto $\bpolP^0(K)$ (that is, $\Pi_K^0(\bv)$ is the
	mean value of $\bv$ over $K$).
	
	\begin{lemma}[Localized quasi-interpolation error in $\bV\upc(\Dom)$]
		\label{lem:quasi_int_Vc}
		For all $p>2$ and all $q\in (\frac{2d}{2+d},2]$, there is $c$ such that
		for all $\bv\in \bV\upc(\Dom)$, all $K\in\calT_h$, and all $\hinH$, we have
		\begin{equation} \label{eq:quasi_int_Vc}
			\|\bv-\inter\upcav_h(\bv)\|_{\bL^2(K)} \le
			c\, h_K^{d(\frac12-\frac1p)} \sum_{K'\in\calT_K\upc} \Big( 
			\|\bv-\Pi^0_{K'}(\bv)\|_{\bL^p(K')} + h_{K'}^{1+d(\frac1p-\frac1q)} \|\ROT\bv\|_{\bL^q(K')}\Big).
		\end{equation}
	\end{lemma}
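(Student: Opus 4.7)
The plan is to exploit the standard two-step construction of the quasi-interpolation operator: $\inter\upcav_h = \mathcal{J}_h\circ J_h^{\mathrm{b}}$, where $J_h^{\mathrm{b}}$ denotes the broken $\bL^2$-projection onto piecewise N\'ed\'elec polynomials of degree $k$ (with local component $Q_K\eqq J_h^{\mathrm{b}}|_K$ the local $\bL^2$-projection onto $\bpolN^k(K)$) and $\mathcal{J}_h$ is the averaging operator restoring tangential conformity; see \cite[Ch.~22]{Ern_Guermond_FEs_I_2021}. On every cell $K\in\calT_h$, I will split
\[
\bv-\inter\upcav_h(\bv) = \bigl(\bv - Q_K(\bv)\bigr) + \bigl(Q_K(\bv) - \mathcal{J}_h(J_h^{\mathrm{b}}(\bv))|_K\bigr),
\]
and estimate the two terms separately.

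For the first term, since $\bpolP^0(K)\subset \bpolN^k(K)$, $\bL^2$-optimality of $Q_K$ gives $\|\bv-Q_K(\bv)\|_{\bL^2(K)}\le \|\bv-\Pi^0_K(\bv)\|_{\bL^2(K)}$, and H\"older's inequality yields $\|\bv-\Pi^0_K(\bv)\|_{\bL^2(K)}\le c\,h_K^{d(\frac12-\frac1p)}\|\bv-\Pi^0_K(\bv)\|_{\bL^p(K)}$, which is already of the advertised form.

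The second term is the heart of the argument. Standard shape-function accounting for the N\'ed\'elec averaging operator, combined with a telescoping trick converting the averaging of edge-based degrees of freedom (which couples rings of cells around a common edge) into sums of face jumps across mesh interfaces, will give an estimate of the form
\[
\bigl\|Q_K(\bv) - \mathcal{J}_h(J_h^{\mathrm{b}}(\bv))|_K\bigr\|_{\bL^2(K)} \le c\, h_K^{1/2} \sum_{K'\in\calT_K\upc}\ \sum_{F\in\calF_{K'}\cap\calFhi} \bigl\|\jump{\gamma_{K',F}\upc(J_h^{\mathrm{b}}(\bv))}_F\bigr\|_{\bL^2(F)}.
\]
This is where the hypothesis $\bv\in\bV\upc(\Dom)$ becomes decisive: by Lemma~\ref{lem:jump_trace}(i), $\jump{\gamma_{K',F}\upc(\bv)}_F=\bzero$ on every interface, so each jump above equals $\jump{\gamma_{K',F}\upc(J_h^{\mathrm{b}}(\bv)-\bv)}_F$. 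Applying \eqref{eq:bnd_gamma} on each side of $F$ followed by Lemma~\ref{eq:inv_ineq_F} with $t=2$, the exponents add up to $(d-1)(\tfrac12-\tfrac1\varrho)+(-\tfrac1\varrho+d(\tfrac1\varrho-\tfrac1p))=\tfrac{d-1}{2}-\tfrac{d}{p}$, and the prefactor $h_K^{1/2}$ then combines (using mesh regularity $h_{K'}\sim h_K$) to give precisely the scaling $h_K^{d(\frac12-\frac1p)}$ multiplying $\|Q_{K'}(\bv)-\bv\|_{\bV\upc(K')}$.

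To close the estimate, I will control $\|Q_{K'}(\bv)-\bv\|_{\bV\upc(K')}$ by the summand on the right-hand side of~\eqref{eq:quasi_int_Vc}. For the $\bL^p$-part, $\bL^p$-stability of $Q_{K'}$ on shape-regular meshes combined with $Q_{K'}(\Pi^0_{K'}(\bv))=\Pi^0_{K'}(\bv)$ yields $\|Q_{K'}(\bv)-\bv\|_{\bL^p(K')}\le c\,\|\bv-\Pi^0_{K'}(\bv)\|_{\bL^p(K')}$. For the curl part, since $\ROT\Pi^0_{K'}(\bv)=\bzero$ I write $\ROT(Q_{K'}(\bv)-\bv)=\ROT Q_{K'}(\bv-\Pi^0_{K'}(\bv))-\ROT\bv$; a polynomial inverse inequality on $\bpolN^k(K')$ together with the H\"older embedding $\bL^p(K')\hookrightarrow \bL^q(K')$ (valid since $p>q$) then gives $h_{K'}^{1+d(\frac1p-\frac1q)}\|\ROT Q_{K'}(\bv-\Pi^0_{K'}(\bv))\|_{\bL^q(K')}\le c\,\|\bv-\Pi^0_{K'}(\bv)\|_{\bL^p(K')}$, while $h_{K'}^{1+d(\frac1p-\frac1q)}\|\ROT\bv\|_{\bL^q(K')}$ is already in the advertised form. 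The main obstacle will be the bookkeeping in the averaging step: correctly identifying all pairs of cells in $\calT_K\upc$ whose degree-of-freedom disagreements contribute to the error on $K$ (in particular through the telescoping of edge-based averages in three dimensions) and verifying that the dimensional scalings from \eqref{eq:bnd_gamma}, Lemma~\ref{eq:inv_ineq_F}, and the polynomial inverse inequalities all combine exactly with the $h_K^{1/2}$ prefactor to reproduce $h_K^{d(\frac12-\frac1p)}$.
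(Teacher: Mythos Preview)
Your approach is essentially the paper's own proof: same splitting $\bv-\inter\upcav_h(\bv)=(\bv-\Pi\upc_K(\bv))+(I-\calJ\upcav_h)(\Pi\upc_h(\bv))$, same treatment of the first term, and the same three tools (Lemma~\ref{eq:inv_ineq_F}, Lemma~\ref{lem:jump_trace}(i), and~\eqref{eq:bnd_gamma}) for the second. One point, however, needs correcting: the \emph{order} in which those tools are applied. As you wrote it, you first replace $\jump{\gamma\upc_{K',F}(J_h^{\mathrm{b}}(\bv))}_F$ by $\jump{\gamma\upc_{K',F}(J_h^{\mathrm{b}}(\bv)-\bv)}_F$ while still measuring in $\bL^2(F)$, and only afterwards invoke Lemma~\ref{eq:inv_ineq_F}. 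This does not work: the tangential trace of $\bv$ on $F$ is only defined in $(\bW^{\frac{1}{\varrho},\varrho'}(F))'$, not in $\bL^2(F)$, so the substitution is not licit in that norm; and Lemma~\ref{eq:inv_ineq_F} applies only to polynomial face data, so it cannot be used once $\bv$ has been inserted. The correct sequence (as in the paper) is: (i) apply Lemma~\ref{eq:inv_ineq_F} with $t=2$ to the polynomial jump $\jump{\gamma\upc_{K',F}(J_h^{\mathrm{b}}(\bv))}_F$, passing to the $(\bW^{\frac{1}{\varrho},\varrho'}(F))'$-norm; (ii) insert the vanishing jump of $\bv$ in that dual norm via Lemma~\ref{lem:jump_trace}(i); (iii) split into one-sided traces and apply~\eqref{eq:bnd_gamma}. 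A second, smaller bookkeeping point: the face collection furnished by \cite[Lem.~4.3]{ErnGu:17_quasi} is $\check\calF_K^\circ$ (mesh interfaces sharing an edge with $K$), whose adjacent cells on \emph{both} sides lie in $\calT_K\upc$; your double sum over $K'\in\calT_K\upc$ and $F\in\calF_{K'}\cap\calFhi$ also contains faces whose opposite cell is outside $\calT_K\upc$, so splitting those jumps would leak contributions beyond the stated patch.
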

	
	\begin{proof}
		Let $\Pi\upc_h:\bL^1(\Dom)\to \bP\upcb_k(\calT_h)\eqq \{\bv_h\in \bL^1(\Dom)\tq
		\bv_{h|K}\in \bpolN^k(K),\, \forall K\in\calT_h\}$ denote the global
		$\bL^2$-orthogonal projection onto the broken Nédélec finite element space 
		$\bP\upcb_k(\calT_h)$.
		Let $\Pi\upc_K:\bL^1(K)\to \bpolN^k(K)$ denote the local
		$\bL^2$-orthogonal projection onto $\bpolN^k(K)$, so that we have 
		$\Pi\upc_h(\bv)_{|K}=\Pi\upc_K(\bv_{|K})$ for all $\bv\in \bL^1(\Dom)$
		and all $K\in\calT_h$. Recall
		(see \cite[Sec.~5]{ErnGu:17_quasi}) that we have $\inter\upcav_h \eqq
		\calJ_h\upcav \circ \Pi\upc_h$, where the 
		operator 
		$\calJ_h\upcav:\bP\upcb_k(\calT_h)\to \bP\upc_k(\calT_h)$ is built by averaging
		the canonical degrees of freedom, see \cite[Sec.~4.2]{ErnGu:17_quasi}. 
		The triangle inequality implies that 
		\[
		\|\bv-\inter\upcav_h(\bv)\|_{\bL^2(K)} \le \|\bv-\Pi\upc_K(\bv)\|_{\bL^2(K)}
		+ \|(I-\calJ\upcav_h)(\Pi_h\upc(\bv))\|_{\bL^2(K)} \qqe \term_1+\term_2.
		\]
		Since $\bpolP^0(K)\subset \bpolN^k(K)$, standard properties of the 
		$\bL^2$-orthogonal projection and H\"older's inequality (recall that $p>2$) imply that
		\[
		\term_1 \le \|\bv-\Pi^0_K(\bv)\|_{\bL^2(K)}
		\le c\, h_K^{d(\frac12-\frac1p)}\|\bv-\Pi^0_K(\bv)\|_{\bL^p(K)}.
		\]
		Moreover, \cite[Lem.~4.3]{ErnGu:17_quasi} followed by Lemma~\ref{eq:inv_ineq_F} (with $t=2$) give (recall that the value of $c$ can change at each occurrence)
		\[
		\term_2 \le c\, h_K^{\frac12} \sum_{F\in\check\calF_K^\circ} \|
		\jump{\gamma_{K,F}\upc(\Pi\upc_h(\bv))}_F \|_{\bL^2(F)}
		\le c\, h_K^{\frac12+(d-1)(\frac12-\frac{1}{\varrho})} \sum_{F\in\check\calF_K^\circ} \|
		\jump{\gamma_{K,F}\upc(\Pi\upc_h(\bv))}_F \|_{(\bW^{\frac{1}{\varrho},\varrho'}(F))'},
		\]
		where $\check\calF_K^\circ$ denotes the collection of the mesh interfaces sharing at least
		an edge with $K$. Observing that $\gamma_{K,F}\upc(\bv)=\bzero$ since $\bv\in \bV\upc(\Dom)$
		(see Lemma~\ref{lem:jump_trace}(i)), we infer that
		\[
		\term_2 \le c\, 
		h_K^{\frac12+(d-1)(\frac12-\frac{1}{\varrho})} \sum_{F\in\check\calF_K^\circ} \|
		\jump{\gamma_{K,F}\upc(\bv-\Pi\upc_h(\bv))}_F \|_{(\bW^{\frac{1}{\varrho},\varrho'}(F))'}.
		\]
		By definition of the jump operator, invoking the triangle inequality and recalling the definition of the set $\calT_K\upc$ yields
		\[
		\term_2 \le c\, 
		h_K^{\frac12+(d-1)(\frac12-\frac{1}{\varrho})} \sum_{K'\in \calT_K\upc} \sum_{F\in\calF_{K'}\cap\calFhi} \|
		\gamma_{K',F}\upc(\bv-\Pi\upc_{K'}(\bv)) \|_{(\bW^{\frac{1}{\varrho},\varrho'}(F))'}.
		\]
		Owing to the bound~\eqref{eq:bnd_gamma} and the shape-regularity of the mesh sequence, we infer that
		\begin{equation} \label{eq:bound_on_t2}
			\term_2 \le c\, h_K^{d(\frac12-\frac1p)} 
			\sum_{K'\in \calT_K\upc} \|\bv-\Pi\upc_{K'}(\bv)\|_{\bV\upc(K')}.
		\end{equation}
		Invoking the triangle inequality and recalling the definition~\eqref{eq:norm_VupcK} of
		the norm equipping $\bV\upc(K')$ gives for all $K'\in \calT_K\upc$,
		\[
		\|\bv-\Pi\upc_{K'}(\bv)\|_{\bV\upc(K')} \le
		\|\bv-\Pi^0_K(\bv)\|_{\bL^p(K')} + h_{K'}^{1+d(\frac1p-\frac1q)} \|\ROT\bv\|_{\bL^q(K')}
		+ \|(\Pi\upc_{K'}-\Pi^0_{K'})(\bv)\|_{\bV\upc(K')},
		\]
		where we used that $\Pi^0_K(\bv)$ is a constant field in $K'$. Invoking 
		inverse inequalities on the first line, the triangle inequality and standard properties of
		the $\bL^2$-orthogonal projection on the second line, and, finally, 
		H\"older's inequality  (recall that $p>2$) 
		and the regularity of the mesh sequence on the third line, we also have
		\begin{align*}
			\|(\Pi\upc_{K'}-\Pi^0_{K'})(\bv)\|_{\bV\upc(K')}
			&\le c\, h_{K'}^{d(\frac1p-\frac12)} \|(\Pi\upc_{K'}-\Pi^0_{K'})(\bv)\|_{\bL^2(K')} \\
			&\le c\, h_{K'}^{d(\frac1p-\frac12)} \|(I-\Pi^0_{K'})(\bv)\|_{\bL^2(K')} \\
			&\le c\, \|(I-\Pi^0_{K'})(\bv)\|_{\bL^p(K')}.
		\end{align*}
		Combining this bound with~\eqref{eq:bound_on_t2} and recalling the bound on $\term_1$
		completes the proof.
	\end{proof}
	
	\medskip
	
	\begin{proof}[Proof of~\eqref{eq:loc_avc}.]
		Let $r\in (0,1]$ and $q\in (\frac{2d}{2+d},2]$. Consider a field
		$\bv\in \bH^r(\Dom)$ with $\ROT\bv\in \bL^q(\Dom)$. The Sobolev 
		embedding theorem implies that $\bv \in \bV\upc(\Dom)$ (indeed,
		we can take $p\eqq \frac{2d}{d-2r}>2$ if $r<\frac{d}{2}$ and any $p\in (2,\infty)$
		otherwise). Applying Lemma~\ref{lem:quasi_int_Vc}, re-arranging the factors involving
		the local mesh sizes on the right-hand side of~\eqref{eq:quasi_int_Vc} and using
		the shape-regularity of the mesh sequence, we infer that
		\[
		\|\bv-\inter\upcav_h(\bv)\|_{\bL^2(K)} \le
		c\,  \sum_{K'\in\calT_K\upc} \Big( h_{K'}^{d(\frac12-\frac1p)}
		\|\bv-\Pi^0_{K'}(\bv)\|_{\bL^p(K')} + h_{K'}^{1+d(\frac12-\frac1q)} \|\ROT\bv\|_{\bL^q(K')}\Big).
		\]
		Owing to \cite[Equ.~(17.19)]{Ern_Guermond_FEs_I_2021} and since $\Pi^0_{K'}(\bv)$ 
		is a constant field in $K'$, we infer that
		\[
		h_{K'}^{d(\frac12-\frac1p)} \|\bv-\Pi^0_{K'}(\bv)\|_{\bL^p(K')}
		\le c\, \big( \|\bv-\Pi^0_{K'}(\bv)\|_{\bL^2(K')} + h_{K'}^r|\bv|_{\bH^r(K')}\big),
		\]
		so that invoking the (fractional) Poincar\'e--Steklov inequality in $K'$ 
		(see, \eg  \cite[Sec.~12.3.1]{Ern_Guermond_FEs_I_2021}) gives
		\[
		h_{K'}^{d(\frac12-\frac1p)} \|\bv-\Pi^0_{K'}(\bv)\|_{\bL^p(K')}
		\le c\, h_{K'}^r|\bv|_{\bH^r(K')}.
		\]
		Combining the above bounds completes the proof.
	\end{proof}
	
	\subsection{Proof of~\eqref{eq:loc_avcz}}
	
	The proof of~\eqref{eq:loc_avcz} follows from minor adaptations of the arguments
	to prove~\eqref{eq:loc_avc}. The most relevant change hinges on the following result.
	
	\begin{lemma}[Localized quasi-interpolation error in $\bV\upc_0(\Dom)$]
		\label{lem:quasi_int_Vcz}
		For all $p>2$ and all $q\in (\frac{2d}{2+d},2]$, there is $c$ such that
		for all $\bv\in \bV\upc_0(\Dom)$, all $K\in\calT_h$, and all $\hinH$, we have
		\begin{equation} \label{eq:quasi_int_Vcz}
			\|\bv-\inter\upcav_{h0}(\bv)\|_{\bL^2(K)} \le
			c\, h_K^{d(\frac12-\frac1p)} \sum_{K'\in\calT_K\upc} \Big( 
			\|\bv-\Pi^0_{K'}(\bv)\|_{\bL^p(K')} + h_{K'}^{1+d(\frac1p-\frac1q)} \|\ROT\bv\|_{\bL^q(K')}\Big).
		\end{equation}
	\end{lemma}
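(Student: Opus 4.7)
The plan is to follow, almost verbatim, the proof of Lemma~\ref{lem:quasi_int_Vc}, replacing every use of Lemma~\ref{lem:jump_trace}(i) by Lemma~\ref{lem:jump_trace}(ii) so as to accommodate the fact that the averaging operator with boundary prescription additionally resets to zero the degrees of freedom attached to the faces of $\calFhb$. Writing $\inter\upcav_{h0} = \calJ\upcav_{h0}\circ \Pi\upc_h$, where $\calJ\upcav_{h0}:\bP\upcb_k(\calT_h)\to \bP\upc_{k,0}(\calT_h)$ is the averaging operator with homogeneous tangential boundary prescription from \cite[Sec.~4.2]{ErnGu:17_quasi}, I would decompose
\[
\|\bv-\inter\upcav_{h0}(\bv)\|_{\bL^2(K)} \le \|\bv-\Pi\upc_K(\bv)\|_{\bL^2(K)} + \|(I-\calJ\upcav_{h0})(\Pi_h\upc(\bv))\|_{\bL^2(K)} \eqq \term_1+\term_2,
\]
and bound $\term_1$ exactly as in the proof of Lemma~\ref{lem:quasi_int_Vc}, using $\bpolP^0(K)\subset \bpolN^k(K)$, the $\bL^2$-stability of $\Pi\upc_K$, and H\"older's inequality, to arrive at $\term_1 \le c\, h_K^{d(\frac12-\frac1p)}\|\bv-\Pi^0_K(\bv)\|_{\bL^p(K)}$.

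For $\term_2$, the key point is that the boundary-prescribed analog of \cite[Lem.~4.3]{ErnGu:17_quasi} controls $\term_2$ by a weighted sum of jumps $\|\jump{\gamma_{K,F}\upc(\Pi_h\upc(\bv))}_F\|_{\bL^2(F)}$ over all mesh faces in a local collection $\check\calF_K$ (i.e., including boundary faces), since the averaging that forces zero tangential trace on $\front$ effectively treats boundary traces as jumps against the zero field on the fictitious outside. After applying the inverse inequality of Lemma~\ref{eq:inv_ineq_F} to pass to the $(\bW^{\frac{1}{\varrho},\varrho'}(F))'$-norm, I would insert $\gamma_{K',F}\upc(\bv)$ inside each jump term: by Lemma~\ref{lem:jump_trace}(i) this quantity vanishes on the interior faces, while on the boundary faces it vanishes thanks to Lemma~\ref{lem:jump_trace}(ii), which is where the hypothesis $\bv\in\bV\upc_0(\Dom)$ is decisive. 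Thus the jumps can be rewritten as $\jump{\gamma_{K',F}\upc(\bv-\Pi\upc_h(\bv))}_F$, exactly as in the previous proof.

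From this point on, the argument is identical to the one for Lemma~\ref{lem:quasi_int_Vc}: invoke the bound~\eqref{eq:bnd_gamma} face by face, use shape-regularity to obtain
\[
\term_2 \le c\, h_K^{d(\frac12-\frac1p)} \sum_{K'\in\calT_K\upc} \|\bv-\Pi\upc_{K'}(\bv)\|_{\bV\upc(K')},
\]
then split $\Pi\upc_{K'}(\bv)-\Pi^0_{K'}(\bv)$ via the triangle inequality, control it by inverse inequalities between $\bL^p(K')$ and $\bL^2(K')$, and use standard stability of the $\bL^2$-orthogonal projection. Combining with the bound on $\term_1$ yields~\eqref{eq:quasi_int_Vcz}. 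The only genuinely new ingredient is the use of Lemma~\ref{lem:jump_trace}(ii) on boundary faces; the main subtlety (and the step I would be careful about) is to make sure that the boundary-prescribed averaging operator indeed admits the stability bound by jumps over interior \emph{and} boundary faces, which is the variant of \cite[Lem.~4.3]{ErnGu:17_quasi} adapted to homogeneous tangential boundary prescription.
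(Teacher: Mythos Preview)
Your proposal is correct and follows essentially the same approach as the paper: decompose into $\term_1+\term_2$, bound $\term_1$ as before, and for $\term_2$ use the boundary-prescribed averaging operator so that the jump sum now runs over all faces in $\check\calF_K$ (interfaces and boundary faces), then invoke Lemma~\ref{lem:jump_trace}(ii) on the boundary faces to insert $\gamma_{K,F}\upc(\bv)$ and proceed exactly as in Lemma~\ref{lem:quasi_int_Vc}. The only cosmetic discrepancy is that the averaging operator with boundary prescription is constructed in \cite[Sec.~6.2]{ErnGu:17_quasi}, not Sec.~4.2.
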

	
	\begin{proof}
		The only relevant difference with the proof of Lemma~\ref{lem:quasi_int_Vc}
		is in the upper bound estimate for $\term_2$. 
		Owing to the homogeneous boundary prescription,
		we now have $\inter_{h0}\upcav\eqq
		\calJ_{h0}\upcav \circ \Pi\upc_h$, where the averaging operator 
		$\calJ_{h0}\upcav:\bP\upcb_k(\calT_h)\to \bP\upc_{k,0}(\calT_h)$ is built as 
		$\calJ_h\upcav$ with the additional requirement that all the degrees of freedom 
		attached to the mesh edges and faces located on the boundary $\front$ are zero
		(see \cite[Sec.~6.2]{ErnGu:17_quasi}). As a result, we now have
		\[
		\term_2 \le c\, h_K^{\frac12} \sum_{F\in\check\calF_K} \|
		\jump{\gamma_{K,F}\upc(\Pi\upc_h(\bv))}_F \|_{\bL^2(F)},
		\]
		where $\check\calF_K$ denotes the collection of the mesh faces (and not only the mesh 
		interfaces) sharing at least an edge with $K$. The bound on $\term_2$ is obtained by using the same
		arguments as above and the fact that $\jump{\gamma_{K,F}(\bv)}_F=\bzero$
		for every mesh boundary face $F\in\calFhb$ since $\bv\in \bV\upc_0(\Dom)$
		(see Lemma~\ref{lem:jump_trace}(ii)).
	\end{proof}

	\section{Raviart--Thomas finite elements} \label{Sec:review_Raviart_Thomas}
	
	The discussion for the Raviart--Thomas finite elements goes along the same lines
	as for the Nédélec finite elements, except that the minimal 
	regularity requirements on the target
	field are less demanding when using the canonical interpolation operator
	(or any extension thereof) since one only needs to give a meaning to the trace of
	the normal component of the target field on the mesh faces. 
	
	Let $\bP\upd_k(\calT_h)$ denote the $\Hdiv$-conforming finite element space
	built on the mesh $\calT_h$ using the Raviart--Thomas finite element of degree $k\in\polN$ 
	(here, the superscript ${}\upd$ refers to the divergence operator) and consider the 
	best-approximation error 
	\begin{equation} \label{eq:best_Pd}
		\inf_{\bv_h\in \bP\upd_k(\calT_h)} \|\bv-\bv_h\|_{\bL^p(\Dom)},
	\end{equation}
	for an arbitrary target field $\bv \in \bW^{r,p}(\Dom)$, with $r>0$ possibly very small.
	Let $\inter_h\upd$ be the canonical interpolation operator associated with the
	Raviart--Thomas finite elements. It is well-known (see, \eg \cite{BoBrF:13}
	or \cite[Sec.~16.1]{Ern_Guermond_FEs_I_2021}) that for all $r\in (\frac{1}{p},k+1]$ if $p>1$
	and $r\in [1,k+1]$ if $p=1$, there is $c$
	such that for all $\bv\in \bW^{r,p}(\Dom)$,
	all $K\in\calT_h$, and all $\hinH$, we have
	\begin{equation}
		\|\bv-\inter_h\upd(\bv)\|_{\bL^p(K)} \le c\, h_K^{r} |\bv|_{\bW^{r,p}(K)},
	\end{equation}
	with $c$ unbounded as $r\downarrow \frac{1}{p}$ if $p>1$.
	For instance, in the Hilbert setting where $p=2$, the minimal regularity requirement is 
	$\bv\in \bH^r(\Dom)$ with $r>\frac12$, and $c$ is unbounded as $r\downarrow\frac12$.
	Moreover, by invoking more sophisticated results on traces and assuming some
	additional integrability property on $\DIV\bv$, one can show (see, \eg
	\cite[Sec.~17.2]{Ern_Guermond_FEs_I_2021}) that 
	for all $r\in (0,1]$ and all $q>\frac{2d}{2+d}$, there is $c$ such that
	for all $\bv\in \bH^r(\Dom)$
	with $\DIV\bv\in L^q(\Dom)$, all $K\in\calT_h$, and all $\hinH$, we have
	\begin{equation} \label{loc_Ihd}
		\|\bv-\inter_h\upd(\bv)\|_{\bL^2(K)} \le c\, \Big(h_K^{r} |\bv|_{\bH^{r}(K)}
		+ h_K^{1+d(\frac12-\frac1q)}\|\DIV\bv\|_{L^q(K)}\Big),
	\end{equation}
	with $c$ unbounded as $r\downarrow0$ or $q\downarrow\frac{2d}{2+d}$.
	
	It is also possible to consider the quasi-interpolation operators devised 
	in \cite{ErnGu:17_quasi}. Let $\inter\updav_h: \bL^1(\Dom)\to \bP\upd_k(\calT_h)$ 
	denote the quasi-interpolation 
	operator associated with the Raviart--Thomas finite element. 
	Then, \cite[Thm.~22.6]{Ern_Guermond_FEs_I_2021} shows that there is 
	$c$ such that for all $r\in [0,k+1]$, all $p\in [1,\infty]$ if $r\in\polN$
	and $p\in[1,\infty)$ otherwise,
	all $\bv\in \bW^{r,p}(\Dom)$, all $K\in\calT_h$, and all $\hinH$, we have
	\begin{equation} \label{eq:inter_updav}
		\|\bv-\inter_h\updav(\bv)\|_{\bL^p(K)} \le c\, h_K^{r} |\bv|_{\bW^{r,p}(\Dom_K\upd)},
	\end{equation}
	$\Dom_K\upd\eqq \interior\big(\bigcup_{K'\in\calT_K\upd} K'\big)$ and $\calT_K\upd$ 
	denotes the collection of mesh
	cells sharing at least one face with $K$. We notice again 
	that a slight loss of localization occurs 
	in~\eqref{eq:inter_updav} since the Sobolev--Slobodeckij seminorm on the right-hand side
	is evaluated over the macroelement $\Dom_K\upd$ and not just over $K$. 
	One can also enforce a homogeneous condition on the normal component
	of the target field over the whole boundary $\front$ of $\Dom$ (for simplicity).
	Recall that the normal trace operator $\gamma\upd:\Hdiv \to \bH^{-\frac12}(\Dom)$
	is defined through a global integration by parts formula (see, \eg 
	\cite[Thm.~4.15]{Ern_Guermond_FEs_I_2021}) and that we have
	$\gamma\upd(\bv)\eqq \bv_{|\front}\SCAL \bn_\Dom$ whenever the field $\bv$ is smooth enough. 
	Then, setting
	$\bP\upd_{k,0}(\calT_h)\eqq \{\bv_h\in\bP\upd_k(\calT_h)\tq \gamma\upd(\bv_h)
	=0\}$, one is interested in establishing local upper bounds
	on the best-approximation error
	\begin{equation} \label{eq:best_Pdz}
		\inf_{\bv_h\in \bP\upd_{k,0}(\calT_h)} \|\bv-\bv_h\|_{\bL^p(\Dom)}.
	\end{equation}
	Let $\inter_{h0}\updav:\bL^1(\Dom)\to \bP\upd_{k,0}(\calT_h)$ 
	denote the quasi-interpolation operator with homogeneous boundary prescription
	associated with the Raviart--Thomas finite elements. Then, 
	\cite[Thm.~22.14]{Ern_Guermond_FEs_I_2021} shows again that 
	for all $r\in [0,\frac1p)$, there is $c$ such that
	for all $\bv\in \bW^{r,p}(\Dom)$, and 
	all $\hinH$, we have
	\begin{equation} \label{eq:inter_updavz}
		\|\bv-\inter_{h0}\updav(\bv)\|_{\bL^p(\Dom)} \le c\, h^{r}\ell_\Dom^{-r} \|\bv\|_{\bW^{r,p}(\Dom)}.
	\end{equation}
	
	We now show that the estimates \eqref{eq:inter_updav} and \eqref{eq:inter_updavz}
	can be localized under suitable additional assumptions on the divergence of
	the field $\bv$.
	Let us first observe that the domain of the normal trace operator can be
	extended to $\bY\upd(\Dom)\eqq \{\bv\in \Ldeuxd\st \DIV\bv\in L^q(\Dom)\}$
	for all $q\in (\frac{2d}{2+d},2]$. Indeed, for all $\bv\in \bY\upd(\Dom)$,
	$\gamma\upd(\bv)\in H^{-\frac12}(\front)$ can still be defined by duality 
	by setting for all $w\in H^{\frac12}(\front)$,
	\begin{equation} \label{eq:ext_traced_glob}
		\langle \gamma\upd(\bv),w\rangle_{\front} \eqq \int_\Dom \Big(\bv\SCAL\GRAD l(w)+
		(\DIV\bv)l(w)\Big)\dif x,
	\end{equation}
	where $l(w)$ denotes a lifting of $w$ in
	$H^1(\Dom)$. This is again a consequence of 
	$q> \frac{2d}{2+d}$ and H\"older's inequality.
	We can now state our main result. For simplicity, we estimate the
	quasi-interpolation error only in the $\bL^2$-norm.
	
	\begin{theorem}[Localized quasi-interpolation error estimate for Raviart--Thomas elements]
		\label{th:loc_av_RT}
		For all $r\in (0,1]$ and all $q\in (\frac{2d}{2+d},2]$, there is $c$ such that for
		all $\bv\in \bH^r(\Dom)$ with $\DIV\bv\in L^q(\Dom)$, all $K\in\calT_h$,
		and all $\hinH$, we have
		\begin{equation} \label{eq:loc_avd}
			\|\bv-\inter\updav_h(\bv)\|_{\bL^2(K)} \le
			c\sum_{K'\in\calT_K\upd} \Big\{ 
			h_{K'}^r |\bv|_{\bH^r(K')} + h_{K'}^{1+d(\frac12-\frac1q)} \|\DIV\bv\|_{L^q(K')}\Big\}.
		\end{equation}
		Moreover, assuming that $\gamma\upd(\bv)=0$, we also have
		\begin{equation} \label{eq:loc_avdz}
			\|\bv-\inter\updav_{h0}(\bv)\|_{\bL^2(K)} \le
			c\sum_{K'\in\calT_K\upd} \Big\{
			h_{K'}^r |\bv|_{\bH^r(K')} + h_{K'}^{1+d(\frac12-\frac1q)} \|\DIV\bv\|_{L^q(K')}\Big\}.
		\end{equation}
	\end{theorem}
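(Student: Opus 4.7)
The plan is to mirror the Nédélec argument from Section~2 almost verbatim, with the curl replaced by the divergence, the tangential trace replaced by the normal trace, and the edge-based set $\calT_K\upc$ replaced by the face-based set $\calT_K\upd$. The underlying machinery is cleaner in the Raviart--Thomas case because the canonical degrees of freedom are attached to the faces of $K$ themselves (not to the edges), so only the mesh faces in $\calF_K$ are involved in the averaging operator rather than the larger collection $\check\calF_K^\circ$.

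First I would introduce the local spaces $\bV\upd(K)\eqq\{\bv\in\bL^p(K)\st \DIV\bv\in L^q(K)\}$ and $\bV\upd(\Dom)$, $\bV\upd_0(\Dom)$, equipped with the dimensionally consistent norm $\|\bv\|_{\bV\upd(K)} = \|\bv\|_{\bL^p(K)} + h_K^{1+d(\frac1p-\frac1q)}\|\DIV\bv\|_{L^q(K)}$. The analog of~\cite{ErnGu:21_FOCM} provides a face-localized normal trace operator $\gamma_{K,F}\upd : \bV\upd(K) \to (W^{\frac{1}{\varrho},\varrho'}(F))'$, defined through integration by parts against a face-to-cell lifting and satisfying a bound of the form $\|\gamma_{K,F}\upd(\bv)\|_{(W^{\frac{1}{\varrho},\varrho'}(F))'} \le c\, h_K^{-\frac{1}{\varrho}+d(\frac{1}{\varrho}-\frac1p)} \|\bv\|_{\bV\upd(K)}$, and an inverse inequality of the form of Lemma~\ref{eq:inv_ineq_F} holds on $\bpolP^k(F) = \gamma_{K,F}\upd(\bpolRT^k(K))$. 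Exactly as in Lemma~\ref{lem:jump_trace}, one then checks that the jumps $\jump{\gamma_{K,F}\upd(\bv)}_F$ vanish across all interfaces when $\bv\in\bV\upd(\Dom)$, and additionally across boundary faces when $\bv\in\bV\upd_0(\Dom)$.

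With these tools in hand, I would prove the analog of Lemma~\ref{lem:quasi_int_Vc}: writing $\inter\updav_h = \calJ_h\updav\circ\Pi_h\upd$, where $\Pi_h\upd$ is the piecewise $\bL^2$-projection onto the broken Raviart--Thomas space and $\calJ_h\updav$ is the averaging operator from \cite{ErnGu:17_quasi}, the triangle inequality gives $\|\bv-\inter\updav_h(\bv)\|_{\bL^2(K)} \le \term_1+\term_2$. The term $\term_1$ is controlled by $c\, h_K^{d(\frac12-\frac1p)}\|\bv-\Pi^0_K(\bv)\|_{\bL^p(K)}$ using that $\bpolP^0(K)\subset\bpolRT^k(K)$ and Hölder. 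For $\term_2$, the averaging estimate of \cite[Lem.~4.3]{ErnGu:17_quasi} adapted to Raviart--Thomas yields a sum of jumps over $\calF_K$, which I bound by the dual norm via the Raviart--Thomas analog of Lemma~\ref{eq:inv_ineq_F}, use that the true jumps of $\bv$ vanish to insert $\Pi_h\upd(\bv)-\bv$, and apply the boundedness of $\gamma_{K',F}\upd$ together with shape-regularity to obtain $\term_2 \le c\, h_K^{d(\frac12-\frac1p)}\sum_{K'\in\calT_K\upd}\|\bv-\Pi_{K'}\upd(\bv)\|_{\bV\upd(K')}$. A standard estimate comparing $\Pi_{K'}\upd$ and $\Pi_{K'}^0$ via inverse inequalities then reduces this to $\|\bv-\Pi^0_{K'}(\bv)\|_{\bL^p(K')} + h_{K'}^{1+d(\frac1p-\frac1q)}\|\DIV\bv\|_{L^q(K')}$.

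Finally, to obtain~\eqref{eq:loc_avd}, I would choose $p\eqq\frac{2d}{d-2r}$ (or any $p>2$ if $r\ge d/2$) so that $\bH^r(\Dom)\hookrightarrow\bL^p(\Dom)$, then combine \cite[Equ.~(17.19)]{Ern_Guermond_FEs_I_2021} with the fractional Poincaré--Steklov inequality on $K'$ to convert $h_{K'}^{d(\frac12-\frac1p)}\|\bv-\Pi^0_{K'}(\bv)\|_{\bL^p(K')}$ into $h_{K'}^r |\bv|_{\bH^r(K')}$. The estimate~\eqref{eq:loc_avdz} follows from the same chain of arguments, with the only modification that $\calJ_{h0}\updav$ additionally zeros out the face-based degrees of freedom on $\front$, so the jump sum ranges over $\calF_K$ rather than $\calF_K\cap\calFhi$, and one invokes Lemma~\ref{lem:jump_trace}(ii) (in its divergence form) to handle the boundary contributions under $\gamma\upd(\bv)=0$. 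The main obstacle, conceptually, is simply to verify that the face-to-cell lifting machinery of \cite{ErnGu:21_FOCM} admits a straightforward divergence counterpart giving the right trace regularity on $(W^{\frac{1}{\varrho},\varrho'}(F))'$; once this is granted, the proof is a mechanical transposition of the Nédélec argument.
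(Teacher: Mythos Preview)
Your proposal is correct and matches the paper's approach exactly: the paper does not give a separate proof of Theorem~\ref{th:loc_av_RT} but simply indicates (at the start of Section~\ref{Sec:review_Raviart_Thomas}) that the argument ``goes along the same lines as for the N\'{e}d\'{e}lec finite elements,'' which is precisely the transposition you spell out. Your observation that the face-based degrees of freedom make the averaging step involve only $\calF_K$ (hence the smaller patch $\calT_K\upd$) is the one structural simplification worth noting, and you have it right.
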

	
	Using the same arguments as above, we readily infer the following result.
	
	\begin{corollary}[Localized best-approximation error for Raviart--Thomas elements]
		\label{cor:glob_av_RT}
		For all $r\in (0,1]$ and all $q\in (\frac{2d}{2+d},2]$, there is $c$ such that for
		all $\bv\in \bH^r(\Dom)$ with $\DIV\bv\in L^q(\Dom)$, 
		and all $\hinH$, we have
		\begin{equation} \label{eq:glob_avd}
			\inf_{\bv_h\in \bP\upd_k(\calT_h)}\|\bv-\bv_h\|_{\bL^2(\Dom)} \le
			c\,\bigg\{\sum_{K\in\calT_h} \Big\{ 
			h_{K}^{2r} |\bv|_{\bH^r(K)}^2 + h_{K}^{2+2d(\frac12-\frac1q)} \|\DIV\bv\|_{L^q(K)}^2\Big\}\bigg\}^{\frac12}.
		\end{equation}
		Moreover, assuming that $\gamma\upd(\bv)=0$, we also have
		\begin{equation} \label{eq:glob_avdz}
			\inf_{\bv_h\in \bP\upd_{k,0}(\calT_h)}\|\bv-\bv_h\|_{\bL^2(\Dom)} \le
			c\,\bigg\{\sum_{K\in\calT_h} \Big\{ 
			h_{K}^{2r} |\bv|_{\bH^r(K)}^2 + h_{K}^{2+2d(\frac12-\frac1q)} \|\DIV\bv\|_{L^q(K)}^2\Big\}\bigg\}^{\frac12}.
		\end{equation}
	\end{corollary}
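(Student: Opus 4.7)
The plan is to derive Corollary~\ref{cor:glob_av_RT} from Theorem~\ref{th:loc_av_RT} by the standard $\ell^2$-summation argument, exactly mirroring the short derivation of Corollary~\ref{cor:glob_av_Ned} from Theorem~\ref{th:loc_av_Ned}. Since $\inter\updav_h(\bv)\in \bP\upd_k(\calT_h)$, the infimum on the left-hand side of~\eqref{eq:glob_avd} is bounded from above by $\|\bv-\inter\updav_h(\bv)\|_{\bL^2(\Dom)}$, which by additivity of the squared $\bL^2$-norm over the mesh cells equals $\bigl(\sum_{K\in\calT_h}\|\bv-\inter\updav_h(\bv)\|_{\bL^2(K)}^2\bigr)^{1/2}$. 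The same applies to~\eqref{eq:glob_avdz} using $\inter\updav_{h0}(\bv)\in \bP\upd_{k,0}(\calT_h)$, which is an admissible test element since $\gamma\upd(\bv)=0$ by assumption.

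Next, I square both sides of the pointwise bound~\eqref{eq:loc_avd} (resp.~\eqref{eq:loc_avdz}) and apply the elementary inequality $\bigl(\sum_{i=1}^N a_i\bigr)^2 \le N\sum_{i=1}^N a_i^2$ with $N\eqq \#\calT_K\upd$. Shape-regularity of the mesh family ensures that $N$ is uniformly bounded in $h$ and $K$, so that
\[
\|\bv-\inter\updav_h(\bv)\|_{\bL^2(K)}^2 \le c\sum_{K'\in\calT_K\upd} \Bigl( h_{K'}^{2r} |\bv|_{\bH^r(K')}^2 + h_{K'}^{2+2d(\frac12-\frac1q)} \|\DIV\bv\|_{L^q(K')}^2\Bigr),
\]
and an identical bound holds for $\inter\updav_{h0}(\bv)$ under the assumption $\gamma\upd(\bv)=0$.

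Finally, I sum this inequality over $K\in\calT_h$ and swap the order of summation, using that
\[
\sum_{K\in\calT_h}\sum_{K'\in\calT_K\upd} F(K') = \sum_{K'\in\calT_h} \#\{K\in\calT_h \tq K'\in\calT_K\upd\}\, F(K').
\]
The only point worth verifying is the finite-overlap property $\sup_{h,K'}\#\{K\in\calT_h \tq K'\in\calT_K\upd\}<\infty$, which, by symmetry of the face-sharing relation, equals $\sup_{h,K'} \#\calT_{K'}\upd$ and is finite by mesh shape-regularity. Taking square roots then yields~\eqref{eq:glob_avd} and~\eqref{eq:glob_avdz}. No genuine obstacle arises: the argument is purely combinatorial once Theorem~\ref{th:loc_av_RT} is available, which is precisely why the authors merely write that the corollary follows ``using the same arguments as above''.
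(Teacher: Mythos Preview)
Your proposal is correct and follows exactly the approach the paper indicates: square the local estimates from Theorem~\ref{th:loc_av_RT}, sum over the mesh cells, and use the uniform bound on $\#\{K\in\calT_h\tq K'\in\calT_K\upd\}$ (equivalently $\#\calT_{K'}\upd$, by symmetry of the face-sharing relation). One small clarification: $\inter\updav_{h0}(\bv)\in\bP\upd_{k,0}(\calT_h)$ holds by construction of the operator regardless of $\bv$; the hypothesis $\gamma\upd(\bv)=0$ is needed only to invoke~\eqref{eq:loc_avdz}, not for admissibility of the test element.
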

	
	\begin{remark}[Comparison with canonical interpolation]
		The canonical Raviart--Thomas 
		interpolation operator and the quasi-interpolation operator achieve the same 
		convergence rates under the same 
		smoothness assumptions. Notice though that the estimates \eqref{eq:loc_avd}-\eqref{eq:loc_avdz}
		are localized over $\calT_K\upd$ whereas the estimate \eqref{loc_Ihd} is localized 
		over the single cell $K$. 
		The upper bounds in Corollary~\ref{cor:glob_av_RT} can also
		be derived  by invoking the canonical Raviart--Thomas 
		interpolation operator. 
		This is in contrast with the Nédélec elements where 
		the estimates~\eqref{eq:loc_avc}-\eqref{eq:loc_avcz} on the quasi-interpolation error 
		are essential to prove
		Corollary~\ref{cor:glob_av_Ned}. These estimates hold for the canonical Nédélec interpolation operator
		only if $r>\frac12$.
	\end{remark}

	\section{Application to Maxwell's equations} \label{sec:Maxwell}
	
	In this section, we briefly present two applications of the above
	results for the approximation of  Maxwell's equations using
	Nédélec finite elements.  We focus on simplified forms of Maxwell's
	equations obtained, \eg in the time-harmonic regime and in the
	eddy currents approximation.
	
	\subsection{Model problem}
	
	Given a source term $\bef\in \bL^q(\Dom)$ with $q\in (\frac{2d}{2+d},2]$,
	the model problem we consider consists of seeking 
	$\bA\in \Hrotz\eqq \{\bv\in \Hrot\tq \gamma\upc(\bv)=\bzero\}$ such that
	$a(\bA,\bb)=\ell(\bb)$ for all $\bb\in\Hrotz$, with the following 
	sesquilinear and antilinear forms:
	\begin{equation}
		a(\ba,\bb) \eqq \int_\Dom (\nu \ba \SCAL \bar{\bb} + 
		\kappa \ROT \ba \SCAL \ROT \bar{\bb}) \dif x,
		\qquad
		\ell(\bb) \eqq \int_\Dom  \bef \SCAL \bar{\bb} \dif x.
	\end{equation}
	We assume that the model parameters $\nu,\kappa$ are both bounded in 
	$\Dom$ and we set $\nu_\sharp\eqq \|\nu\|_{L^\infty(\Dom)}$,
	$\kappa_\sharp\eqq \|\kappa\|_{L^\infty(\Dom)}$. We also assume that
	there are real numbers $\theta$, $\nu_\flat>0$, $\kappa_\flat>0$ such that
	\begin{equation} \label{Hyp:mu_kappa_Weak_MHD_Maxwell}
		\essinf_{\bx\in\Dom}\Re\big(e^{i\theta}\nu(\bx)\big) \ge \nu_\flat\quad
		\text{and}
		\quad \essinf_{\bx\in\Dom}\Re\big(e^{i\theta}\kappa(\bx)\big) \ge \kappa_\flat.
	\end{equation}
	This condition ensures the coercivity of the sesquilinear form $a$.
	Therefore, the model problem is well-posed owing to the Lax--Milgram lemma,
	and its unique weak solution is such that $\nu \bA + \ROT(\kappa\ROT A)=\bef$ in
	$\Dom$ together with $\gamma\upc(\bA)=\bzero$. 
	To obtain a regularity result on the weak solution, 
	we assume that there is a partition $\{\Dom_m\}_{m\in\intset{1}{M}}$
	of $\Dom$ into $M\ge1$ disjoint polyhedral Lipschitz subsets such that 
	$\nu_{|\Dom_m}$ and $\kappa_{|\Dom_m}$ are constant for all $m\in\intset{1}{M}$.
	One can then show (see \cite{Jochmann:99,BoGuL:13}, see also
	\cite{BirSo:87,Costabel:90,AmBDG:98}) that there is $r\in (0,\frac12)$ such that
	\begin{equation} \label{eq:reg_sol_Max}
		\bA \in \bH^r(\Dom), \qquad \ROT\bA\in \bH^r(\Dom).
	\end{equation}
	
	\subsection{Classical approximation with prescribed boundary 
		conditions}
	
	In this setting, 
	the discrete problem approximated with Nédélec finite elements consists of seeking
	$\bA_h\in \bP\upc_{k,0}(\calT_h)$ such that $a(\bA_h,\bb_h)=\ell(\bb_h)$ for all
	$\bb_h\in \bP\upc_{k,0}(\calT_h)$. Recall that $k\in\polN$ denotes the 
	polynomial degree and that $\famTh$ is a shape-regular family of affine, matching, 
	simplicial meshes such that each mesh covers $\Dom$ exactly.
	Notice that the homogeneous boundary condition
	on the tangential component of $\bA_h$ at the boundary is explicitly enforced in
	the discrete problem. 
	
	The error analysis is performed by establishing suitable
	stability, consistency, and boundedness properties. 
	To avoid distracting technicalities,
	we do not track the dependency of the constants in
	the error analysis on the nondimensional factors $\frac{\nu_\sharp}{\nu_\flat}$, 
	$\frac{\kappa_\sharp}{\kappa_\flat}$, and $\nu_\sharp\kappa_\sharp^{-1}\ell_\Dom^2$
	(recall that $\ell_\Dom$ denotes
	a characteristic (global) length scale associated with $\Dom$). 
	Referring to \cite{ErnGu:18-CAMWA}
	(see also \cite[Chap.~44]{Ern_Guermond_FEs_II_2021}) for further insight, the main 
	error estimate states that there is $c$ such that we have
	\begin{equation}
		\|\bA-\bA_h\|_{\Hrot} \le c\, \inf_{\bb_h\in \bP\upc_{k,0}(\calT_h)}
		\|\bA-\bb_h\|_{\Hrot}.
	\end{equation}
	Moreover, invoking commuting quasi-interpolation operators, it is also shown therein
	that there is $c$ such that we have
	\begin{equation} \label{eq:quasi_opt_Max}
		\|\bA-\bA_h\|_{\Hrot} \le c\, \Big( \inf_{\bb_h\in \bP\upc_{k,0}(\calT_h)}
		\|\bA-\bb_h\|_{\Ldeuxd} + \ell_\Dom \inf_{\bd_h\in \bP\upd_{k,0}(\calT_h)}
		\|\ROT\bA-\bd_h\|_{\Ldeuxd}\Big).
	\end{equation}
	
	The analysis performed in this paper allows us to derive the following result where the 
	best approximation error is fully localized even when $r\in (0,\frac12)$.
	
	\begin{corollary}[Localized error estimate]
		There is $c$ such that we have
		\begin{equation}
			\|\bA-\bA_h\|_{\Hrot} \le c\,\bigg\{\sum_{K\in\calT_h} h_{K}^{2r} \Big(
			|\bA|_{\bH^r(K)}^2 + \ell_\Dom^{2(1-r)} \|\ROT\bA\|_{\bL^2(K)}^2
			+ \ell_\Dom^2 |\ROT\bA|_{\bH^r(K)}^2\Big) \bigg\}^{\frac12}.
		\end{equation}
	\end{corollary}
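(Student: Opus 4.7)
The plan is to combine the quasi-optimality estimate \eqref{eq:quasi_opt_Max} with the two localized best-approximation results proved earlier in the paper, namely Corollary~\ref{cor:glob_av_Ned} for the N\'{e}d\'{e}lec approximation of $\bA$ and Corollary~\ref{cor:glob_av_RT} for the Raviart--Thomas approximation of $\ROT\bA$. Throughout, I use the regularity assertion \eqref{eq:reg_sol_Max} that $\bA\in\bH^r(\Dom)$ and $\ROT\bA\in\bH^r(\Dom)$ for some $r\in(0,\tfrac12)$, together with the boundary condition $\gamma\upc(\bA)=\bzero$.

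First, I bound the first infimum on the right-hand side of \eqref{eq:quasi_opt_Max}. Since $\bA\in\bH^r(\Dom)$, $\ROT\bA\in\bL^2(\Dom)$, and $\gamma\upc(\bA)=\bzero$, Corollary~\ref{cor:glob_av_Ned} applies with exponent $q=2$, which yields
\[
\inf_{\bb_h\in \bP\upc_{k,0}(\calT_h)}\|\bA-\bb_h\|_{\Ldeuxd}^2 \le c\sum_{K\in\calT_h}\Big(h_K^{2r}|\bA|_{\bH^r(K)}^2+h_K^2\|\ROT\bA\|_{\bL^2(K)}^2\Big).
\]
Since $h_K\le\ell_\Dom$ and $r\in(0,1]$, I rewrite $h_K^2=h_K^{2r}h_K^{2(1-r)}\le \ell_\Dom^{2(1-r)}h_K^{2r}$, which recasts this bound in the form of the first two terms of the target estimate.

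Second, I bound the infimum over $\bP\upd_{k,0}(\calT_h)$ by applying Corollary~\ref{cor:glob_av_RT} to the field $\ROT\bA$. This requires three ingredients: (i) $\ROT\bA\in\bH^r(\Dom)$, which is granted by \eqref{eq:reg_sol_Max}; (ii) $\DIV(\ROT\bA)=0$ in $\Dom$, which is a trivial vector-calculus identity so that the second term in the Raviart--Thomas bound simply vanishes; (iii) $\gamma\upd(\ROT\bA)=0$ on $\front$. The last item is the most delicate point, since it is needed in order to approximate $\ROT\bA$ using the subspace $\bP\upd_{k,0}(\calT_h)$ with the vanishing normal-trace prescription. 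I verify it from the definition \eqref{eq:ext_traced_glob}: for any $w\in H^{\frac12}(\front)$ with lifting $l(w)\in H^1(\Dom)$, an integration by parts together with $\DIV(\ROT\bA)=0$ gives $\langle\gamma\upd(\ROT\bA),w\rangle_\front=\int_\Dom \ROT\bA\SCAL\GRAD l(w)\dif x$, and a further integration by parts against $\gamma\upc(\bA)=\bzero$ (using $\ROT\GRAD l(w)=\bzero$) shows that this vanishes. Corollary~\ref{cor:glob_av_RT} then provides
\[
\inf_{\bd_h\in\bP\upd_{k,0}(\calT_h)}\|\ROT\bA-\bd_h\|_{\Ldeuxd}^2\le c\sum_{K\in\calT_h} h_K^{2r}|\ROT\bA|_{\bH^r(K)}^2,
\]
and multiplication by $\ell_\Dom^2$ produces the third term of the target estimate. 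Summing the two bounds, inserting them into \eqref{eq:quasi_opt_Max}, and taking a square root concludes the argument.

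The only nonroutine step is the verification that $\gamma\upd(\ROT\bA)=0$ at the level of the weak trace in the extended space $\bY\upd(\Dom)$; once this is in place, the rest of the proof is a straightforward assembly of results that have already been established in the paper, with the extra observation that $h_K^2$ can be absorbed into $\ell_\Dom^{2(1-r)}h_K^{2r}$ to obtain the exact form of the right-hand side stated in the corollary.
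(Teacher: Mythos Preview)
Your proof is correct and follows essentially the same approach as the paper: combine \eqref{eq:quasi_opt_Max} with Corollary~\ref{cor:glob_av_Ned} (applied to $\bA$ with $q=2$) and Corollary~\ref{cor:glob_av_RT} (applied to $\ROT\bA$, using $\DIV(\ROT\bA)=0$ and $\gamma\upd(\ROT\bA)=0$), then absorb $h_K^{2}\le \ell_\Dom^{2(1-r)}h_K^{2r}$. Your explicit verification of $\gamma\upd(\ROT\bA)=0$ via the integration-by-parts identity in $\Hrotz$ is a welcome elaboration of a point the paper states parenthetically.
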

	
	\begin{proof}
		We combine the error estimate~\eqref{eq:quasi_opt_Max} with the 
		results from Corollary~\ref{cor:glob_av_Ned} and
		Corollary~\ref{cor:glob_av_RT}. In particular, we apply the estimate
		\eqref{eq:glob_avcz} with $\bv\eqq \bA$ (notice that indeed $\gamma\upc(\bA)=\bzero$)
		and $q\eqq2$, and the estimate \eqref{eq:glob_avdz} with $\bv\eqq\ROT\bA$
		(notice that indeed $\gamma\upd(\ROT\bA)=0$ and that $\DIV\bv=0$).
		This yields
		\[
		\|\bA-\bA_h\|_{\Hrot} \le c\,\bigg\{\sum_{K\in\calT_h} \Big(
		h_{K}^{2r} |\bA|_{\bH^r(K)}^2 + h_{K}^{2} \|\ROT\bA\|_{\bL^2(K)}^2
		+ \ell_\Dom^2 h_{K}^{2r} |\ROT\bA|_{\bH^r(K)}^2\Big) \bigg\}^{\frac12},
		\]
		whence the assertion follows from $h_{K}^{2}\le h_K^{2r}\ell_\Dom^{2(1-r)}$.
	\end{proof}

	\subsection{Nitsche's boundary penalty technique}
	
	Let us now consider a variant of the above discrete problem where the boundary
	condition is enforced by Nitsche's boundary penalty technique; see
	\cite{ErnGu:18-CMAM} and also \cite[Chap.~45]{Ern_Guermond_FEs_II_2021}.
	For all $K\in\calT_h$, we set $\nu_K\eqq |\nu_{|K}|$, $\kappa_K\eqq |\kappa_{|K}|$,
	$\nu_{r,K}\eqq \Re(e^{i\theta}\nu_K)$, and $\kappa_{r,K}\eqq \Re(e^{i\theta}\kappa_K)$.
	To avoid distracting technicalities,
	we do not track the dependency of the constants in
	the error analysis on the nondimensional factors
	$\frac{\nu_K}{\nu_{r,K}}$ and $\frac{\kappa_K}{\kappa_{r,K}}$.
	Recall that for every mesh boundary face $F\in\calFhb$, there is a mesh cell
	$K_-\in\calT_h$ such that $F=\partial K_-\cap \front$. 
	The discrete problem now consists of seeking $\bA_h\in \bP\upc_k(\calT_h)$
	such that $a_h(\bA_h,\bb_h)=\ell(\bb_h)$ for all $\bb_h\in \bP\upc_k(\calT_h)$,
	with $a_h(\SCAL,\SCAL)=a(\SCAL,\SCAL)-n_h(\SCAL,\SCAL)+s_h(\SCAL,\SCAL)$,
	with the following sesquilinear forms defined on $\bP\upc_k(\calT_h)\times 
	\bP\upc_k(\calT_h)$:
	\begin{equation}
		n_h(\ba_h,\bb_h)\eqq \sum_{F\in\calFhb}\int_F ((\kappa\ROT\ba_h)\times\bn_F)\SCAL
		\bar{\bb}_h \dif s,
		\qquad
		s_h(\ba_h,\bb_h)\eqq \sum_{F\in\calFhb} \eta_0 e^{-i\theta} \frac{\lambda_F}{h_F}
		\int_F (\ba_h\times\bn_F)\SCAL(\bar{\bb}_h\times\bn_F)\dif s, 
	\end{equation}
	where $\eta_0$ is a nondimensional parameter that must be chosen large enough
	so that the sesquilinear form $a_h$ is coercive on $\bP\upc_k(\calT_h)$
	(see \cite[Lem.~3.3]{ErnGu:18-CMAM} or \cite[Lem.~45.1]{Ern_Guermond_FEs_II_2021}). 
	
	To perform the error analysis, it is convenient
	to introduce the space $\bV\loS \eqq \{\ba\in\Hrotz\tq \kappa\ROT\ba \in \bV\upc(\Dom)\}$,
	where the space $\bV\upc(\Dom)$ is defined in \eqref{eq:def_Vc_Dom}.
	We observe that the exact solution satisfies $\bA\in\bV\loS$; indeed, $\bA\in\Hrotz$
	by construction, $\ROT(\kappa\ROT \bA)=\bef-\nu \bA \in \bL^q(\Dom)$ owing to
	our assumption on the source term $\bef$, and $\kappa\ROT\bA \in \bL^p(\Dom)$
	for some $p>2$ owing to the Sobolev embedding theorem (recall that $\ROT\bA\in
	\bH^r(\Dom)$ by~\eqref{eq:reg_sol_Max} and that $\kappa$ satisfies a suitable
	multiplier property as shown in \cite{Jochmann:99,BoGuL:13}). 
	Notice that the approximation error $\bA-\bA_h$ belongs to the space 
	$\bV_\sharp\eqq \bV\loS+\bP\upc_k(\calT_h)$ which we equip with the norm
	\begin{equation}
		\begin{aligned}
			\|\bb\|_{\bV_{\sharp}}^2 := & \sum_{K\in\calT_h}
			\Big\{ \nu_K \|\bb\|^2_{\bL^2(K)} + \kappa_K
			\|\ROT \bb\|^2_{\bL^2(K)}\Big\} 
			+ \sum_{F\in \calFhb} \frac{\kappa_{K_-}}{h_{K_-}}\|\bb_h\times \bn\|_{\bL^2(F)}^2 \\
			&+\sum_{F\in \calFhb} \kappa_{K_-}
			\Big\{h_{K_-}^{2d(\frac{1}{2}-\frac{1}{p})}\|\ROT \bb\|_{\bL^p(K_-)}^2 
			+ h_{K_-}^{2+2d(\frac{1}{2}-\frac{1}{q})}\|\ROT (\ROT \bb)\|_{\bL^q(K_-)}^2 
			\Big\},
		\end{aligned}
	\end{equation}
	for all $\bb\eqq \bb\loS+\bb_h\in \bV_\sharp$ with $\bb\loS\in \bV\loS$ and
	$\bb_h\in \bP\upc_k(\calT_h)$.
	The following error estimate is established in \cite[Sec.~7.2]{ErnGu:18-CMAM} and
	\cite[Thm.~45.6]{Ern_Guermond_FEs_II_2021} (with $q=2$): There is $c$ such that we have
	\begin{equation}
		\|\bA-\bA_h\|_{\bV_\sharp} \le c\, \inf_{\bb_h\in \bP\upc_{k}(\calT_h)}
		\|\bA-\bb_h\|_{\bV_\sharp}.
	\end{equation}
	
	The analysis performed above allows us to derive the following error estimate where the 
	best approximation error is fully localized even when $r\in (0,\frac12)$.
	
	\begin{corollary}[Localized error estimate]
		There is $c$ such that we have
		\begin{align}
			\|\bA-\bA_h\|_{\bV_\sharp} \le {}& c\,\kappa_\sharp^{\frac12} 
			\bigg\{\sum_{K\in\calT_h} h_{K}^{2r} \Big(
			\ell_\Dom^{-2} |\bA|_{\bH^r(K)}^2 + \ell_\Dom^{-2r} \|\ROT\bA\|_{\bL^2(K)}^2
			+ |\ROT\bA|_{\bH^r(K)}^2\Big)  \nonumber \\
			& +   \sum_{F\in\calFhb} 
			h_{K_-}^{2d(\frac{d+2}{2d}-\frac1q)} \|\bef-\nu\bA\|_{\bL^q(K_-)}^2\bigg\}^{\frac12}.
		\end{align}
	\end{corollary}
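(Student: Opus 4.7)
The plan is to combine the quasi-optimality bound
$\|\bA-\bA_h\|_{\bV_\sharp} \le c\, \inf_{\bb_h\in\bP\upc_k(\calT_h)}\|\bA-\bb_h\|_{\bV_\sharp}$
recalled just above with the specific choice $\bb_h \eqq \inter_{h0}\upcav(\bA)$, which is legitimate since $\bP\upc_{k,0}(\calT_h)\subset \bP\upc_k(\calT_h)$. Two facts motivate this choice. First, since $\gamma\upc(\bA)=\bzero$ and $\gamma\upc(\bb_h)=\bzero$, the boundary trace contribution $\sum_{F\in\calFhb}\frac{\kappa_{K_-}}{h_{K_-}}\|(\bA-\bb_h)\times\bn\|^2_{\bL^2(F)}$ vanishes identically. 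Second, the Ern--Guermond quasi-interpolation operators with boundary prescription satisfy the commuting relation $\ROT\circ\inter_{h0}\upcav = \inter_{h0}\updav\circ\ROT$, so that $\ROT\bb_h = \inter_{h0}\updav(\ROT\bA)$; moreover $\DIV(\ROT\bA)=0$ identically and $\gamma\upd(\ROT\bA)=0$ (integrating by parts and using $\gamma\upc(\bA)=\bzero$), so that \eqref{eq:glob_avdz} applies cleanly to $\ROT\bA$.

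I would then handle the four groups of terms in $\|\bA-\bb_h\|_{\bV_\sharp}$ in turn. The bulk mass term $\sum_K \nu_K\|\bA-\bb_h\|^2_{\bL^2(K)}$ is controlled by \eqref{eq:glob_avcz} with $q=2$, and the residual factor $h_K^2\|\ROT\bA\|^2_{\bL^2(K)}$ is rewritten as $h_K^{2r}\ell_\Dom^{2(1-r)}\|\ROT\bA\|^2_{\bL^2(K)}$ using $h_K\le\ell_\Dom$; together with the implicit nondimensional assumption $\nu_\sharp\kappa_\sharp^{-1}\ell_\Dom^2 \sim 1$, this yields the $\kappa_\sharp\ell_\Dom^{-2}|\bA|^2_{\bH^r(K)}$ and $\kappa_\sharp\ell_\Dom^{-2r}\|\ROT\bA\|^2_{\bL^2(K)}$ contributions. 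The bulk curl term $\sum_K \kappa_K\|\ROT(\bA-\bb_h)\|^2_{\bL^2(K)} = \sum_K \kappa_K\|\ROT\bA-\inter_{h0}\updav(\ROT\bA)\|^2_{\bL^2(K)}$ is controlled by \eqref{eq:glob_avdz} applied to $\ROT\bA$ with $q=2$; since $\DIV(\ROT\bA)=0$, only the $\kappa_\sharp h_K^{2r}|\ROT\bA|^2_{\bH^r(K)}$ term survives. The tangential-trace boundary contribution is zero by the preceding paragraph.

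The two remaining boundary-cell terms are the delicate part. For $\sum_F\kappa_{K_-}h_{K_-}^{2d(1/2-1/p)}\|\ROT(\bA-\bb_h)\|^2_{\bL^p(K_-)}$, I would use the triangle inequality, bound the discrete piece $\|\ROT\bb_h\|_{\bL^p(K_-)}$ by an inverse inequality $\le c\, h_{K_-}^{-d(1/2-1/p)}\|\ROT\bb_h\|_{\bL^2(K_-)}$ which reduces to the already-controlled bulk $\bL^2$ curl bound, and control the continuous piece by the rescaled Sobolev embedding $h_{K_-}^{d(1/2-1/p)}\|\ROT\bA\|_{\bL^p(K_-)} \le c(\|\ROT\bA\|_{\bL^2(K_-)}+h_{K_-}^r|\ROT\bA|_{\bH^r(K_-)})$ (a pullback of $\bH^r\hookrightarrow\bL^p$ from the reference simplex). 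For $\sum_F\kappa_{K_-}h_{K_-}^{2+2d(1/2-1/q)}\|\ROT\ROT(\bA-\bb_h)\|^2_{\bL^q(K_-)}$, the key observation is that on each boundary cell $K_-\subset\Dom_m$ the piecewise-constant coefficient $\kappa$ equals $\kappa_{K_-}$, so the PDE gives $\kappa_{K_-}\ROT\ROT\bA = \bef-\nu\bA$ pointwise on $K_-$, producing directly the final $h_{K_-}^{2d((d+2)/(2d)-1/q)}\|\bef-\nu\bA\|^2_{\bL^q(K_-)}$ term; the discrete residual $\|\ROT\ROT\bb_h\|_{\bL^q(K_-)}$ is absorbed via inverse inequalities back into the bulk $\bL^2$ curl bound controlled in the previous paragraph. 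The main obstacle is the careful bookkeeping of the various scalings in $h_{K_-}$, $\ell_\Dom$, and $\kappa_\sharp$ so that every piece resulting from the triangle inequalities on the boundary cells fits into one of the four groups appearing in the stated estimate.
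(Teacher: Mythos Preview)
Your overall strategy coincides with the paper's: take $\bb_h=\inter_{h0}\upcav(\bA)$, use $\gamma\upc(\bA-\bb_h)=\bzero$ to kill the boundary trace term, invoke the commuting identity $\ROT\inter_{h0}\upcav=\inter_{h0}\updav\ROT$ for the bulk curl term, and then handle the two remaining boundary-cell contributions. The bulk terms and the zero-trace argument are fine.

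The gap is in your treatment of the two boundary-cell terms. Splitting $\ROT(\bA-\bb_h)$ by the triangle inequality into $\ROT\bA$ and $\ROT\bb_h$ destroys the error structure: after your inverse inequality you are left with $\kappa_{K_-}\|\ROT\bb_h\|_{\bL^2(K_-)}^2$, and your rescaled embedding produces $\kappa_{K_-}\|\ROT\bA\|_{\bL^2(K_-)}^2$. Neither of these is small---both are of order $\kappa_\sharp\|\ROT\bA\|_{\bL^2(K_-)}^2$ with \emph{no} power of $h_{K_-}$ in front, whereas the stated bound only allows $\kappa_\sharp h_K^{2r}\ell_\Dom^{-2r}\|\ROT\bA\|_{\bL^2(K)}^2$. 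The phrase ``reduces to the already-controlled bulk $\bL^2$ curl bound'' conflates $\|\ROT\bb_h\|_{\bL^2}$ with the genuinely small quantity $\|\ROT(\bA-\bb_h)\|_{\bL^2}$. The same defect appears in the curl--curl term: the inverse inequality on $\ROT\ROT\bb_h$ again lands on $\|\ROT\bb_h\|_{\bL^2(K_-)}$, not on the error.

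The fix is to insert a local polynomial, e.g.\ $\bw_{K_-}\eqq\Pi^0_{K_-}(\ROT\bA)$, before splitting. Then $\|\ROT(\bA-\bb_h)\|_{\bL^p(K_-)}\le\|\ROT\bA-\bw_{K_-}\|_{\bL^p(K_-)}+\|\bw_{K_-}-\ROT\bb_h\|_{\bL^p(K_-)}$; the second piece is discrete and the inverse inequality gives $h_{K_-}^{-d(\frac12-\frac1p)}\big(\|\ROT\bA-\bw_{K_-}\|_{\bL^2(K_-)}+\|\ROT(\bA-\bb_h)\|_{\bL^2(K_-)}\big)$, while for the first piece the embedding combined with the fractional Poincar\'e inequality yields $h_{K_-}^{d(\frac12-\frac1p)}\|\ROT\bA-\bw_{K_-}\|_{\bL^p(K_-)}\le c\,h_{K_-}^r|\ROT\bA|_{\bH^r(K_-)}$. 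Every surviving term then carries a factor $h_{K_-}^r$ or matches the already-bounded bulk curl error. The curl--curl term is handled analogously. This is precisely the mechanism hidden in the paper's reference to \cite[Thm.~45.6]{Ern_Guermond_FEs_II_2021}.
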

	
	\begin{proof}
		Adapting the arguments from the proof of \cite[Thm.~45.6]{Ern_Guermond_FEs_II_2021}
		(setting $t=r$ therein) where the infimum is realized by using a commuting 
		quasi-interpolant with prescribed boundary conditions, we infer that 
		\begin{align*}
			\inf_{\bb_h\in \bP\upc_{k}(\calT_h)} \|\bA-\bb_h\|_{\bV_\sharp}
			\le {}& c\, \bigg\{ \nu_\sharp
			\inf_{\bb_h\in \bP\upc_{k,0}(\calT_h)}
			\|\bA-\bb_h\|_{\Ldeuxd}^2 + \kappa_\sharp \inf_{\bd_h\in \bP\upd_{k,0}(\calT_h)}
			\|\ROT\bA-\bd_h\|_{\Ldeuxd}^2 \\
			& + \kappa_\sharp \sum_{F\in\calFhb} \Big( h_{K_-}^{2r} |\ROT\bA|_{\bH^r(K_-)}^2 
			+ h_{K_-}^{2d(\frac{d+2}{2d}-\frac1q)} \|\bef-\nu\bA\|_{\bL^q(K_-)}^2 \Big)\bigg\}^{\frac12}.
		\end{align*}
		Combining this estimate with the estimates
		\eqref{eq:glob_avcz} and \eqref{eq:glob_avdz} proves the assertion (as above, we hide
		the nondimensional factor $\nu_\sharp\kappa_\sharp^{-1}\ell_\Dom^2$ in the generic 
		constant $c$).
	\end{proof}

	\bibliographystyle{siam}
	\bibliography{refs}
	
\end{document}